\numberwithin{equation}{section}
\tikzset{cross/.style={cross out, draw=black, minimum size=2*(#1-\pgflinewidth), inner sep=0pt, outer sep=0pt},
cross/.default={4.5pt}}
\DeclareMathOperator{\lk}{\ell k}
\renewcommand{\geq}{\geqslant}
\renewcommand{\leq}{\leqslant} 
\renewcommand{\epsilon}{\varepsilon}
\newcommand{\Z}{\mathbb{Z}}
\newcommand{\C}{\mathbb C}
\newcommand{\PP}{\mathbb P}
\DeclareFontFamily{U}{mathx}{\hyphenchar\font45}
\DeclareFontShape{U}{mathx}{m}{n}{
      <5> <6> <7> <8> <9> <10>
      <10.95> <12> <14.4> <17.28> <20.74> <24.88>
      mathx10
      }{}
\DeclareSymbolFont{mathx}{U}{mathx}{m}{n}
\DeclareMathAccent{\widecheck}{0}{mathx}{"71}
\DeclareMathAccent{\wideparen}{0}{mathx}{"75}
\newtheorem{thm}{Theorem}[section]
\newtheorem{lemma}[thm]{Lemma}
\newtheorem{prop}[thm]{Proposition}
\newtheorem{defin}[thm]{Definition}
\newtheorem{cor}[thm]{Corollary}
\xpatchcmd{\@thm}{\thm@headpunct{.}}{\thm@headpunct{}}{}{}
\theoremstyle{definition}
\newtheorem{remark}[thm]{Remark}
\pgfplotsset{compat=1.13}
\begin{document}
\title{Traces of links and simply connected 4-manifolds}
\author{\scshape{Alberto Cavallo and Andr\'as I. Stipsicz}\\ \\
 \footnotesize{Alfr\'ed R\'enyi Institute of Mathematics,}\\
 \footnotesize{Budapest 1053, Hungary}\\ \\ \small{cavallo.alberto@renyi.hu}\\ \small{stipsicz.andras@renyi.hu}}
\date{}

\maketitle

\begin{abstract}
 We study the set $\widehat{\mathcal S}_M$ of framed smoothly slice
 links which lie on the boundary of the complement of a 1-handlebody
 in a closed, simply connected, smooth 4-manifold $M$. We show that $\widehat{\mathcal S}_M$
 is well-defined and describe how it relates to exotic phenomena in
 dimension four. In particular, in the case when $X$ is smooth, with a
 handle decompositions with no 1-handles and homeomorphic to but not
 smoothly embeddable in $D^4$, our results tell us that $X$ is exotic
 if and only if there is a link $L\hookrightarrow S^3$ which is
 smoothly slice in $X$, but not in $D^4$.
 
 Furthermore, we extend the notion of high genus 2-handle attachment,
 introduced by Hayden and Piccirillo, to prove that exotic 4-disks 
 that are smoothly embeddable in $D^4$, and therefore possible
 counterexamples to the smooth 4-dimensional Sch\"onflies conjecture,
 cannot be distinguished from $D^4$ only by comparing the slice genus
 functions of links.
\end{abstract}

\section{Introduction}
The smooth 4-dimensional Poincar\'e conjecture 4SPC (asserting that
any smooth 4-manifold homeomorphic to the 4-sphere $S^4$ is
diffeomorphic to it) is one of the most important and well-studied
problems in topology. The difficulty of the problem stems from the fact that
currently we do not have smooth invariants directly applicable to 4-manifolds
homotopy equivalent to $S^4$.

It is known that the existence of an exotic 4-sphere (a counterexample
to 4SPC) is equivalent to the existence of an exotic 4-disk, i.e. a
smooth 4-manifold with boundary $S^3$ which is homeomorphic to the
4-disk $D^4$, but not diffeomorphic to it.  For such 4-manifolds we
have the hope to be able to distinguish them based on the feature that
knots (and links) might have different slice genera in them, see \cite{4,MMP,MP}.  This
observation leads us to the following definition.  (We fix the
convention that every manifold is oriented and diffeomorphisms, denoted by $\cong$, between
manifolds always preserve the given orientations.)
\begin{defin}
  Suppose that $X$ is a compact, simply connected, smooth $4$-manifold
  with $\partial X=S^3$. Let $\mathcal S_X$ denote the set of
  smoothly slice links in $X$, i.e. the set of those links in $S^3$,
  which bound smoothly, disjointly and properly embedded $2$-disks
  in $X$.
\end{defin}

In trying to understand potential exotic $D^4$'s, we can divide them
into two groups (as it has been done for exotic ${\mathbb {R}}^4$'s).
\begin{defin}
  Suppose that $X$ is a possibly exotic $4$-disk. $X$ is {\bf \emph{small}},
  if there is a smooth embedding $f:X\hookrightarrow D^4$; otherwise $X$ is
  {\bf \emph{large}}.
\end{defin}

Our first observation shows that small exotic 4-disks cannot
be detected using slice links.
\begin{prop}
 \label{prop:smallexotic}
  Suppose that $X$ is a small exotic $4$-disk. Then $\mathcal S_X$ and
  $\mathcal S_{D^4}$ are equal.
\end{prop}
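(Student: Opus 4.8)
The plan is to prove the two inclusions $\mathcal S_X\subseteq\mathcal S_{D^4}$ and $\mathcal S_{D^4}\subseteq\mathcal S_X$ separately, the engine in both being the smooth embedding $f\colon X\hookrightarrow D^4$ supplied by smallness. After composing with a small radial shrink I may assume $f(X)\subset\mathrm{int}(D^4)$, so that $f(\partial X)=f(S^3)$ is a smoothly embedded $3$-sphere in the interior and the complementary region $C:=\overline{D^4\setminus f(X)}$ is a compact cobordism from $f(S^3)$ to $\partial D^4$. A van Kampen and Mayer--Vietoris computation, using that $X$ and $D^4$ are contractible with boundary $S^3$, shows that $C$ is simply connected and a homology $S^3\times[0,1]$ whose two boundary inclusions are homotopy equivalences; that is, $C$ is a smooth $h$-cobordism between two copies of $S^3$. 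It is also worth recording the auxiliary fact that the double $X\cup_{S^3}\overline X$ is diffeomorphic to $S^4$, since $X\times[0,1]$ is a compact contractible smooth $5$-manifold with simply connected boundary, hence a $5$-disk; this embeds $X$ as a hemisphere of a standard $S^4$.

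For $\mathcal S_X\subseteq\mathcal S_{D^4}$, let $L$ be slice in $X$ with disjoint, properly embedded disks $\Delta\subset X$. Then $f(\Delta)\subset f(X)\subset D^4$ are disjoint, properly embedded disks bounding the copy $f(L)\subset f(S^3)$, and under the diffeomorphism $f|_{\partial X}$ the link $f(L)$ has exactly the link type of $L$. To conclude that $L$ bounds disks in $D^4$ with boundary on $\partial D^4$, it remains to push these disks across $C$, i.e.\ to trade the inner sphere $f(S^3)$ carrying $f(L)$ for the genuine boundary $\partial D^4$ carrying $L$. The reverse inclusion $\mathcal S_{D^4}\subseteq\mathcal S_X$ is the mirror of this: a slice disk system for $L\subset\partial D^4$ is pushed inward across $C$ to $f(S^3)$ and then pulled back into $X$ by $f^{-1}$, so that it too reduces to understanding $C$ read in the opposite direction.

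The \textbf{main obstacle} is precisely this transport across $C$. If $C$ were a smooth product $S^3\times[0,1]$, its product structure would carry $L$ on one end to $f(L)$ on the other and the trace of the resulting isotopy would extend the disks, finishing both inclusions at once. But $C$ is only known to be a simply connected smooth $h$-cobordism between $3$-spheres, and such a cobordism need \emph{not} be a smooth product: indeed $C$ is a product exactly when $f(S^3)$ is smoothly unknotted in $D^4$, which is the smooth $4$-dimensional Sch\"onflies problem for this very sphere. Consequently one cannot simply slide the disks out to $\partial D^4$ as embedded surfaces, and this is where I expect the genuine work to lie. My plan to get around it is to argue at the level of the links rather than the disks: Freedman's topological $h$-cobordism theorem gives $C\cong_{\mathrm{TOP}}S^3\times[0,1]$, so $L$ and $f(L)$ are at least topologically concordant in $C$, and because we are transporting only disks (genus zero) every surface involved can be kept inside the smoothly standard piece $f(X)$ together with an honest collar of $\partial D^4$. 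Turning this topological control into a smooth slice conclusion is the crux, and I anticipate that it is exactly here—and not merely in producing the embedding $f$—that the hypothesis that $X$ is \emph{small}, as opposed to an arbitrary homotopy $4$-disk, must be used in an essential way.
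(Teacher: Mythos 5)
Your reduction to the cobordism $C=\overline{D^4\setminus f(X)}$ is where the argument breaks, and the repair you propose cannot close the gap. Freedman only gives $C\cong_{\mathrm{TOP}}S^3\times[0,1]$, so pushing the disks across $C$ produces at best \emph{topologically} (locally flat) slice disks for $L$ in $D^4$: smooth disks in $f(X)$ glued to a topological concordance in $C$ is only a topological slice surface. There is no mechanism for upgrading this to smooth sliceness --- the existence of topologically but not smoothly slice knots shows that no such upgrade exists in general --- and the assertion that ``every surface involved can be kept inside the smoothly standard piece'' is unjustified, since the transported boundaries must end up on $\partial D^4$ and hence the disks must genuinely cross $C$. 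Your closing guess about where smallness enters is also off: in the paper's proof smallness is used \emph{only} to produce the embedding $f$; no understanding of $C$ is ever required. (A separate error: your auxiliary claim that $X\times[0,1]\cong D^5$ because it is a compact contractible $5$-manifold with simply connected boundary invokes a theorem whose proof needs the $h$-cobordism theorem, hence ambient dimension at least six; in dimension five that statement would imply the smooth $4$-dimensional Poincar\'e conjecture, since every homotopy $4$-sphere bounds a compact contractible smooth $5$-manifold.)

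The idea you are missing is to move the reference $4$-ball rather than the disks; this is exactly what the trace embedding lemma (Lemma~\ref{lemma:trace}) accomplishes. If $L$ is slice in $X$ with disk system $\Delta$, then the $0$-trace $X(L^*)\cong D^4\cup\nu(\Delta)$ embeds in $\widehat X$, the $4$-handle serving as the $0$-handle of the trace. By the Cerf--Palais theorem all smoothly embedded $4$-balls in a closed connected smooth $4$-manifold are isotopic; applied in $\widehat X$ this isotopes $X(L^*)$ off a small ball and hence into $X$ (this is how Proposition~\ref{prop:inclusion} is proved, via Proposition~\ref{prop:bs}), and smallness then gives $X(L^*)\hookrightarrow X\hookrightarrow D^4\subset S^4$. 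Now apply Cerf--Palais once more, in $S^4$: the complement of the embedded $0$-handle of the trace is diffeomorphic to the \emph{standard} $D^4$ --- this is uniqueness of embedded $4$-balls, not the Sch\"onflies problem for embedded $3$-spheres, so it is unconditionally true --- and the cores of the $2$-handles are disjoint, properly and smoothly embedded disks in it bounding $L$. Thus $L\in\mathcal S_{D^4}$, with the mysterious region $C$ simply swallowed into the complement of the ball, whose standardness is automatic. The reverse inclusion is symmetric, using a chart ball $D^4\hookrightarrow X$; this is the paper's one-line proof $D^4\hookrightarrow X\hookrightarrow D^4\Rightarrow\mathcal S_{D^4}\subset\mathcal S_X\subset\mathcal S_{D^4}$.
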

\begin{remark}
  The existence of a small exotic 4-disk is equivalent to the
  failure of the well-studied smooth 4-dimensional Sch\"onflies
  conjecture, asserting that a smoothly embedded 3-sphere in
  $S^4$ bounds a smoothly embedded $D^4$.
  \end{remark}

Based on the result of Proposition~\ref{prop:smallexotic} one can hope
that large exotic $D^4$'s can be detected by their set of slice
links. Indeed, this result applies for a special class of exotic
disks. Recall that a 4-manifold is \emph{geometrically simply
  connected} if it admits a handle decomposition without 1-handles.
Obviously, geometrically simply connected manifolds are simply
connected, but the converse does not hold. For example, a compact,
contractible 4-manifold $X$ with boundary $\partial X\neq S^3$
(according to a result of Casson, see \cite{Kirby}) is never
geometrically simply connected.  The question whether simple
connectivity implies geometrical simple connectivity for closed
4-manifolds, is wide open.

Suppose that $L$ is an $n$-component link. Let $S^3_{(0, \ldots
  ,0)}(L)$ denote the 3-manifold we get by performing 0-surgery on
each component of $L$. Let $X(L)$ denote the corresponding surgery
\emph{trace}, that is, the 4-manifold given by attaching 0-framed 2-handles
along the components of $L$.

There is a simple way to construct an exotic 4-sphere once two knots
$K_1$ and $ K_2$ with diffeomorphic 0-surgery are given, where one of
the knots (say $K_1$) is smoothly slice, while $K_2$ is not. Indeed,
glue the complement of the slice disk of $K_1$ to the 0-trace
$X(K_2)$; the application of the trace embedding lemma
(cf. Lemma~\ref{lemma:trace}) shows that the result is exotic. This
construction, which is used in \cite{MP}, admits a natural generalization to links, where
Lemma~\ref{lemma:trace} provides the simple extension of the usual
trace embedding lemma to links.  As the next results show, this
construction is sufficient to produce all geometrically simply
connected, large, exotic 4-disks.

\begin{thm}
 \label{thm:GSCPC}
 Suppose that $L\hookrightarrow S^3$ is an $n$-component link such
 that $S^3_{(0,...,0)}(L)\cong\#^nS^1\times S^2=Y$ and consider the
 smooth $4$-manifold given by $S=X(L)\cup_Y\natural^nS^1\times
 D^3$. Then $S$ is a geometrically simply connected homotopy
 $4$-sphere.  Furthermore, every geometrically simply connected exotic
 $4$-sphere $S$ is constructed in this way and $S$ is exotic if and
 only if $X(L)$ is not diffeomorphic to $\natural^nS^2\times D^2$.
\end{thm}

As it was indicated above, for a geometrically simply connected large
exotic 4-disk $X$ the set $\mathcal S_X$ is sufficient to verify its exoticness.

\begin{thm}
 \label{thm:geomsimpconn}
  A geometrically simply connected $4$-disk $X$ is small if and only
  if $\mathcal S_X=\mathcal S_{D^4}$. Consequenly, a geometrically
  simply connected exotic $4$-disk $X$ is large if and only if there
  is a link $L\hookrightarrow S^3$ that is smoothly slice in $X$, but
  not in $D^4$.
  \end{thm}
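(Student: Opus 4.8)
The plan is to prove the two implications of the main equivalence separately, and then read off the ``Consequently'' clause from the explicit link produced along the way. The implication ``$X$ small $\Rightarrow \mathcal S_X=\mathcal S_{D^4}$'' is already contained in Proposition~\ref{prop:smallexotic} (the case $X\cong D^4$ being trivial), so the entire content lies in the converse: assuming $\mathcal S_X=\mathcal S_{D^4}$, I must show that $X$ is small. First I record the reformulation that a homotopy $4$-ball $X$ is small if and only if it embeds smoothly in the standard $S^4$: indeed, any smooth embedding $X\hookrightarrow S^4$ has nonempty open complement, and deleting a point of that complement places a neighbourhood of $X$ inside $S^4\setminus\{\mathrm{pt}\}\cong\mathbb{R}^4\hookrightarrow\mathrm{int}\,D^4$, while the reverse implication is immediate from $D^4\subset S^4$.

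Next I manufacture a distinguished slice link. Capping the boundary, $\hat X:=X\cup_{S^3}D^4$ is a closed homotopy $4$-sphere, and the handle decomposition of $X$ with no $1$-handles (together with the capping $4$-handle) shows that $\hat X$ is geometrically simply connected. Theorem~\ref{thm:GSCPC} therefore yields an $n$-component link $L\hookrightarrow S^3$ with $S^3_{(0,\ldots,0)}(L)\cong\#^nS^1\times S^2=Y$ and a decomposition $\hat X=X(L)\cup_Y\natural^nS^1\times D^3$ in which $X(L)\subset X\subset\hat X$ (the capping $D^4$ being absorbed into the $\natural^nS^1\times D^3$ summand). Since this exhibits the $0$-trace $X(L)$ as smoothly embedded in $\hat X=X\cup_{S^3}D^4$ with complement $\natural^nS^1\times D^3$, the link trace embedding lemma (Lemma~\ref{lemma:trace}) gives that $L$ is smoothly slice in $X$, i.e. $L\in\mathcal S_X$. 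Invoking the hypothesis $\mathcal S_X=\mathcal S_{D^4}$, we get $L\in\mathcal S_{D^4}$, and applying Lemma~\ref{lemma:trace} now with the standard $D^4$ (so that the capped manifold is $S^4$) we obtain a smooth embedding $X(L)\hookrightarrow S^4$ whose complement $C$ is a homotopy $\natural^nS^1\times D^3$.

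It remains to upgrade the embedding of the trace to an embedding of all of $X$. Writing $X=X(L)\cup_Y W_3$, where $W_3$ is the cobordism from $Y=\#^nS^1\times S^2$ to $S^3$ obtained by attaching the $n$ $3$-handles of our decomposition, I must show that $W_3$ embeds in the complement $C$ rel $Y$; equivalently, that the $n$ surgery $2$-spheres $\Sigma_i=\{p_i\}\times S^2\subset Y$ along which the $3$-handles are attached bound disjoint smoothly embedded $3$-balls in $C$, with the leftover region a homotopy $4$-ball. Granting this, $X=X(L)\cup_Y W_3$ embeds in $X(L)\cup_Y C=S^4$, so $X$ is small by the reformulation of the first paragraph. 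I expect this step to be the main obstacle: the spheres $\Sigma_i$ are null-homotopic in $C$ (which is homotopy equivalent to $\vee^nS^1$), but null-homotopic smoothly embedded $2$-spheres need not bound embedded $3$-balls, so genuine input is required. The natural approach is to exploit that each $\Sigma_i$ is geometrically dual to the cocore of the $i$-th $2$-handle of $X(L)$---equivalently, to the $i$-th slice disk witnessing $L\in\mathcal S_{D^4}$---and to use this duality, via a handle-trading argument inside the homotopy handlebody $C$, to produce the required balls.

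Finally, the ``Consequently'' clause follows without appealing to any general inclusion between the two sets of slice links. For $X$ exotic, combining the two implications gives $X$ small $\iff\mathcal S_X=\mathcal S_{D^4}$, and the construction above furnishes an explicit witness: the link $L$ always lies in $\mathcal S_X$, whereas the crux step shows that $L\in\mathcal S_{D^4}$ forces $X$ to be small. Hence if $X$ is large then $L\in\mathcal S_X\setminus\mathcal S_{D^4}$, and conversely the existence of any link smoothly slice in $X$ but not in $D^4$ shows $\mathcal S_X\neq\mathcal S_{D^4}$ and therefore, by the main equivalence, that $X$ is large.
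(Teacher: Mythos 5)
Your first two paragraphs track the paper: the easy direction is Proposition~\ref{prop:smallexotic}, and producing the link $L$ that presents the 2-handles of $X$ and deducing $X(L)\hookrightarrow S^4$ from $\mathcal S_X=\mathcal S_{D^4}$ via Lemma~\ref{lemma:trace} is exactly how the proof of the paper's Theorem~\ref{thm:slice_links} begins (modulo a cosmetic slip: $X(L)\hookrightarrow\widehat X$ gives that $L^*$, not $L$, is slice in $X$, so your witness link should be the mirror). But your third paragraph is a genuine gap, not a proof. You must upgrade $X(L)\hookrightarrow S^4$ to $X\hookrightarrow S^4$, and your plan --- show that the surgery spheres $\Sigma_i\subset Y$ bound disjoint embedded $3$-balls in the complement $C=S^4\setminus\mathring{X(L)}$ --- is left as a sketch (``a handle-trading argument''), with no actual argument given. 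Worse, it rests on a false premise: $C$ is not homotopy equivalent to $\vee^nS^1$. Unwinding the trace embedding lemma, $C$ is the complement in $D^4$ of the slice disks for the mirror link, and fundamental groups of slice disk complements are normally generated by meridians but are in general far from free; there is no reason that the null-homotopic spheres $\Sigma_i$ should bound disjoint embedded balls there, and the paper never proves (nor needs) any statement of this kind.

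What the paper does instead (Proposition~\ref{prop:fake}, the ``fake $(2,3)$-cancelling pair'') avoids the complement entirely: since $X=X(L)\cup\{n\ \text{3-handles}\}$, both $X(L)$ and $X\:\natural^nS^2\times D^2$ become $\widehat X$ after gluing $\natural^nS^1\times D^3$, so Lemma~\ref{lemma:bouquet} gives $X\:\natural^nS^2\times D^2\cong X(L)$. Hence $X$ sits inside $X(L)$ as a boundary-connected summand, and composing with the embedding $X(L)\hookrightarrow S^4$ immediately yields $X\hookrightarrow S^4$, i.e.\ $X$ is small. Note that this embedding of $X$ is \emph{not} an extension of the inclusion $X(L)\subset S^4$ by $3$-handles --- the image of $X$ lies inside $X(L)$ itself --- which is precisely why the paper never has to confront the question your approach gets stuck on. Your final paragraph (the ``Consequently'' clause, with an explicit witness link) is fine once the crux step is filled in this way, and it matches the paper's Corollary~\ref{cor:large}.
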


The definition of the set $\mathcal S_X$ can be extended to contain
links with various framings, and even in other 3-manifolds (such as
$\#^k S^1\times S^2$) naturally associated to $X$, providing the set
of framed links $\widehat{\mathcal {S}}_X$. We will discuss these
extensions (and the precise description of $\widehat{\mathcal {S}}_X$)
in Section~\ref{sec:frames}, leading us to the following result.

\begin{thm}
 \label{thm:type}
 If the smooth $4$-dimensional Poincar\'e conjecture holds then the
 set $\widehat{\mathcal S}_X$ always determines the diffeomorphism
 type of a closed, simply connected, smooth $4$-manifold $X$.
\end{thm}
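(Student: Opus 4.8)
The plan is to reconstruct $X$ from $\widehat{\mathcal S}_X$ by recovering the complement of a $1$-handlebody and then capping it off canonically. Fix a handle decomposition of $X$ and let $N=\natural^{g}S^1\times D^3$ be the associated $1$-handlebody, so that $X=N\cup_{Y}W$ with $Y=\partial N=\#^{g}S^1\times S^2$ and $W=\overline{X\setminus N}$ the complement. By the discussion of Section~\ref{sec:frames} the set $\widehat{\mathcal S}_X$ is well defined and coincides with the set of framed links on $Y$ bounding disjoint, properly embedded disks in $W$. The integer $g$ is recovered from the set as the number of $S^1\times S^2$ summands of the relevant ambient $3$-manifold, and the reconstruction of $X$ reduces to showing that this set determines $W$ up to diffeomorphism: indeed $X=W\cup_{Y}N$, and by the theorem of Laudenbach and Po\'enaru every self-diffeomorphism of $\#^{g}S^1\times S^2$ extends over $\natural^{g}S^1\times D^3$, so once $W$ is known the re-gluing of $N$ is canonical and yields a well-defined diffeomorphism type.

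To recover $W$ I would turn it upside down. Viewing the $4$-handle of $X$ as a $0$-handle and the $3$-handles as $1$-handles exhibits $W$ as a $2$-handlebody $W=\bigl(\natural^{m}S^1\times D^3\bigr)\cup(\text{$2$-handles})$, where $m$ is the number of $3$-handles; the belt circles of these $2$-handles form a framed link $L\subset Y$ bounding the corresponding co-core disks, hence $L\in\widehat{\mathcal S}_X$. Surgery on $L$ turns $Y$ into $\#^{m}S^1\times S^2$, and the trace of this surgery is exactly the $2$-handle cobordism, so $W$ is obtained from the trace of $L$ by capping the $\#^{m}S^1\times S^2$ end with $\natural^{m}S^1\times D^3$ (again canonically, by Laudenbach--Po\'enaru). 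Such \emph{reconstructing} links are singled out inside $\widehat{\mathcal S}_X$ intrinsically: they are the framed slice links $L$ on $Y$ whose surgery yields some $\#^{m}S^1\times S^2$ and whose framing and linking data realize the full second homology carried by $W$. For any such $L$, the link version of the trace embedding lemma (Lemma~\ref{lemma:trace}) guarantees that the trace $X(L)$ embeds in $X$ with complement a compact homotopy $\natural^{m}S^1\times D^3$.

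The crux, and the step I expect to be the main obstacle, is to upgrade this complementary homotopy handlebody to the genuine $\natural^{m}S^1\times D^3$. Given two manifolds $X_1,X_2$ with $\widehat{\mathcal S}_{X_1}=\widehat{\mathcal S}_{X_2}$, I would fix a link $L$ reconstructing $X_1$; since it lies in the common set it is also slice in $W_2$, but a priori it bounds some other family of disks there, and cutting $W_2$ along them leaves only a homotopy $\natural^{m}S^1\times D^3$ rather than the standard one. The discrepancy between these two handlebodies is concentrated in a homotopy $4$-ball summand, precisely the kind of small exotic piece that is invisible to slice links in the sense of Proposition~\ref{prop:smallexotic}. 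This is exactly where the hypothesis enters: assuming the smooth $4$-dimensional Poincar\'e conjecture there are no exotic $4$-spheres, hence no exotic $4$-disks of either type, so every homotopy $\natural^{m}S^1\times D^3$ arising this way is standard. Consequently the reconstruction $L\mapsto W$ is unambiguous, giving $W_1\cong W_2$ and therefore $X_1\cong X_2$. I would close by remarking that the hypothesis cannot be dropped: without it a small exotic sphere $\Sigma$ produces $\widehat{\mathcal S}_X=\widehat{\mathcal S}_{X\#\Sigma}$, so the set alone could not pin down the diffeomorphism type.
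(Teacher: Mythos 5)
Your opening moves coincide with the paper's: the proof of Theorem~\ref{thm:slice_links_k} (of which Theorem~\ref{thm:type} is a short corollary) likewise takes a framed link coming from a handle decomposition of $X_1$ (there, the attaching link of the 2-handles rather than your belt-circle link), notes that it lies in $\widehat{\mathcal S}_{X_1}=\widehat{\mathcal S}_{X_2}$, and applies the trace embedding lemma to embed its trace in $X_2$. The gap is exactly at the step you flag as the crux, and your resolution of it does not work, for two reasons. First, Lemma~\ref{lemma:trace} and its framed version produce only the embedding $X(L)\hookrightarrow X_2$; they say nothing about the complement. The slice disks of $L$ in $X_2$ need not be related to cocores of any handle decomposition of $X_2$, so the complement is a priori just some compact $4$-manifold; that it is a homotopy $\natural^m S^1\times D^3$ is asserted, not proved. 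Second, and fatally, even granting that, neither the claim that its difference from the standard handlebody ``is concentrated in a homotopy $4$-ball summand'' nor the claim that 4SPC then standardizes it is available. 4SPC concerns closed homotopy $4$-spheres, equivalently homotopy $4$-disks with boundary $S^3$ (Proposition~\ref{prop:bs}); it is not known to imply that a homotopy $\natural^m S^1\times D^3$ bounded by $\#^m S^1\times S^2$ is standard. Already for $m=1$: assuming 4SPC, a homotopy $S^1\times D^3$ is the complement of a smoothly embedded $2$-sphere in $S^4$ with trivial normal bundle and group $\Z$, and whether such $2$-knot complements are standard is an open problem. So your step requires strictly more than the stated hypothesis.

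The paper's proof sidesteps the complement entirely. By Proposition~\ref{prop:fake} (fake $(2,3)$-cancelling pairs, itself a consequence of Lemma~\ref{lemma:bouquet}), the trace of the presenting link is diffeomorphic to $(X_1)_0\,\natural^t S^2\times D^2$, where $(X_1)_0=X_1\setminus\mathring{D^4}$ and $t$ is the number of $3$-handles. Hence the embedding of that trace into $X_2$ already contains an embedded punctured copy of $X_1$, whatever the complement may be, giving $X_2\cong X_1\# M'$; this is the content of Theorem~\ref{thm:slice_links_k}. Running the inclusion of sets in both directions forces $M'$ to be a closed homotopy $4$-sphere, and that closed homotopy sphere is the only place where 4SPC is invoked --- where it genuinely applies --- to conclude $M'\cong S^4$ and hence $X_2\cong X_1$. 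If you replace your complement-identification step by this ``identify the trace, ignore the complement'' step, the rest of your outline (including the use of Laudenbach--Po\'enaru for well-definedness of the regluing) is sound.
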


A further refinement of the set of slice links is to consider the
(slice) genus function on the set of links in the boundary of a
4-manifold. This naturally extends the usual genus function of a
closed 4-manifold to the setting of 4-manifolds with boundary,
and this concept can be interesting even in the case when the manifold
at hand has no second homology. This approach requires the extension
of the trace embedding lemma to be relevant in this context; in
particular, we will attach higher genus handles (as it has been
already considered in~\cite{HP}) along framed knots and links,
and will consider higher order traces (when the core surfaces of the
handles have potentially multiple boundary components).

Equipped with these tools, we can show that even the genus function cannot
detect small exotic 4-disks.
  \begin{thm}
 \label{thm:slice}
 If $X$ is a small exotic $4$-disk and $L$ is a link in $S^3$ then
 $g_4(L)=g_4^X(L)$ and $\chi_4(L)=\chi_4^X(L)$, where $g_4^X$ and
 $\chi _4^X$ denote the genus functions in $X$, with the convention
 that for $X=D^4$ we omit the superscript.
\end{thm}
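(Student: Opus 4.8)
The plan is to prove each equality by two opposite inequalities, handling $g_4$ and $\chi_4$ in exactly the same way (read ``genus of a connected surface'' as ``Euler characteristic of a surface without closed components''); I will phrase everything for surfaces, so that the disk case is literally Proposition~\ref{prop:smallexotic} and the present statement is its higher-genus analogue. Write $Y=\partial X=S^3$ for the fixed boundary identification.

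First, the inequality valid for \emph{every} homotopy $4$-ball. I would show $g_4^X(L)\le g_4(L)$ and $\chi_4^X(L)\ge\chi_4(L)$ by transplanting a surface from $D^4$ into $X$. Given a properly embedded $F\subset D^4$ with $\partial F=L$, apply the radial Morse function $r\colon D^4\to[0,1]$: after a small isotopy $F$ misses the center and $m:=\min_F r>0$, so $F\subset r^{-1}[m,1]\cong S^3\times[m,1]$, a collar of $\partial D^4$. This standard collar embeds into the boundary collar $\partial X\times[0,1)$ of $X$ by the identity on $S^3$, carrying $F$ to a surface in $X$ of the same topological type and with boundary the original $L\subset\partial X$. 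Thus every surface in $D^4$ is realized in $X$, which gives the stated inequalities for both invariants; note that this uses nothing about smallness.

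Second, the inequality that uses smallness, and the heart of the proof. Because $X$ is small it embeds in $D^4\subset S^4$, so it sits in $S^4$ as one side of a smoothly embedded $3$-sphere, $S^4=X\cup_Y X'$, with $X'$ the complementary piece. The key point is that the boundary connected sum of the two sides is standard:
\[
X\,\natural\,X'\cong D^4 .
\]
To see this, pick $p\in Y\setminus L$ and a small $4$-ball $B\subset S^4$ about $p$ in standard position with respect to $Y$ (so $(B,B\cap Y)\cong(D^4,D^3)$); then $S^4\setminus\operatorname{int}B\cong D^4$, and cutting it along $Y$ exhibits it as $X$ and $X'$, each with a boundary half-ball removed, glued along $Y\setminus\operatorname{int}(B\cap Y)\cong D^3$, i.e. as $X\,\natural\,X'$. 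Now take $\Sigma\subset X$ realizing $\chi_4^X(L)$ (resp. $g_4^X(L)$). The connected sum is performed along $B\cap Y$, which is disjoint from $L$, so it leaves $\Sigma$ untouched and places it properly in $X\,\natural\,X'$ with $\partial\Sigma=L\subset\partial(X\,\natural\,X')=Y\#Y'$; since we sum with $Y'\cong S^3$ away from $L$, the pair $(Y\#Y',L)\cong(S^3,L)$ is the original link. Composing with a diffeomorphism $X\,\natural\,X'\cong D^4$ then yields a surface of the same Euler characteristic (resp. genus) for $L$ in the standard $D^4$, giving the reverse inequalities.

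The one point that needs care — and what I regard as the main obstacle — is that the diffeomorphism $\Psi\colon X\,\natural\,X'\to D^4$ a priori distorts the boundary, so I must check that $\Psi$ carries $L$ to a link isotopic to the original $L\subset\partial D^4$. This is where I would invoke the triviality of the mapping class group of $S^3$: $\Psi|_{\partial}$, read through the identification $\partial(X\,\natural\,X')\cong S^3$ under which $\Sigma$ bounds the original $L$, is an orientation-preserving self-diffeomorphism of $S^3$ (orientation-preserving because $\Psi$ may be chosen to respect orientations), hence isotopic to the identity; so $\Psi(L)\cong L$ and the transplanted surface genuinely computes $g_4(L)$ and $\chi_4(L)$. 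Combining the two directions yields $g_4^X(L)=g_4(L)$ and $\chi_4^X(L)=\chi_4(L)$. I expect the higher-order trace machinery of the previous section to play no role here; the content of smallness is precisely that it lets $X$ cancel against its complement via $X\,\natural\,X'\cong D^4$, whereas for a large disk no such complement exists and the argument genuinely breaks down.
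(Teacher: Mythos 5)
Your argument is correct, but it takes a genuinely different route from the paper's. The paper proves Theorem~\ref{thm:slice} as an immediate corollary of the machinery it has just built: by the trace embedding lemma for high order traces (restated via Proposition~\ref{prop:bs} so that the target is $X$ itself rather than $\widehat X$), $L$ bounds a surface $\Sigma$ in a possibly exotic $4$-disk if and only if $X^{\mathcal P_\Sigma}(L^*)$ embeds in it; since smallness gives $D^4\hookrightarrow X\hookrightarrow D^4$ and embeddings compose, bounding in $X$ and bounding in $D^4$ are equivalent. This is Proposition~\ref{prop:smallexotic} upgraded word for word, with all the geometric work delegated to the high order trace construction. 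You avoid that machinery entirely (exactly as you predicted) and transplant the surfaces directly: a collar argument for $g_4^X(L)\le g_4(L)$ and $\chi_4^X(L)\ge\chi_4(L)$, and for the reverse inequalities the cancellation $X\,\natural\,X'\cong D^4$ against the complementary side $X'$ of $X$ in $S^4$, plus Cerf's theorem $\pi_0\mathrm{Diff}^+(S^3)=1$ to fix the boundary identification. Both proofs are sound, but two points in yours deserve to be made explicit. First, the realization proving $X\,\natural\,X'\cong D^4$ (the two halves of $S^4\setminus\operatorname{int}B$ glued along the large disk $Y\setminus\operatorname{int}(B\cap Y)$) is \emph{not} the one in which $\Sigma$ is properly embedded: in that picture $L$ lies in the gluing locus, hence in the interior of $S^4\setminus\operatorname{int}B$. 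For the surface you need the sum performed along the small disk $B\cap Y$, and the two realizations agree by well-definedness of the boundary connected sum, i.e.\ uniqueness of $3$-disks in $S^3$ up to ambient isotopy. Second, the appeal to $\pi_0\mathrm{Diff}^+(S^3)=1$ is genuinely needed in your setup and correctly placed; the paper's proof never invokes it because its identification issues are absorbed into the Cerf--Palais disk uniqueness already cited in Section~\ref{sec:prelims}. The trade-off is clear: given Section~\ref{sec:higherorder} the paper's proof is two lines and runs in parallel with the sliceness case, while yours is self-contained, elementary, and isolates the entire content of smallness in the single identity $X\,\natural\,X'\cong D^4$.
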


The paper is composed as follows: in Section~\ref{sec:prelims}
we list a few preliminary results, and show that small exotic 4-disks
cannot be detected using sliceness of links.
In Section~\ref{sec:traces} we focus on geometrically simply connected
exotic 4-disks, and in Section~\ref{sec:frames} we extend the notion of
$\mathcal S_X$ to framed links and show that (assuming 4SPC) these sets
characterize closed, simply connected, smooth 4-manifolds. Finally, in Section~\ref{sec:higherorder}
we discuss the genus function, and show that even this invariant cannot
distinguish a small exotic 4-disk from $D^4$.

\bigskip

\noindent {\bf Acknowledgement}:
Both authors were supported by the \emph{\'Elvonal} Grant KKP 126683 provided
by NKFIH.

\section{Preliminaries}
\label{sec:prelims}
It follows from the Cerf-Palais lemma \cite{Cerf,Palais} that 
in a closed, connected, smooth (resp. topological) $n$-manifold $M$
 all smooth (resp. locally flat) embeddings $D^n\hookrightarrow M$ are
 isotopic.  This statement then easily implies
 that for a smooth embedding $\iota \colon D^4\hookrightarrow S^4$ we have
 $S^4\setminus\mathring{\iota(D^4)}\cong D^4$.
This result shows that for every pair of embedded $D^4$ in a given
homotopy 4-sphere there is a diffeomorphism that sends one into the
other. This observation immediately leads to a relation
between exotic 4-disks and homotopy 4-spheres; here, for an exotic 4-disk
$X$, we denote $\widehat X$ the exotic 4-sphere which is gotten by
gluing a 4-handle to $X$.
\begin{prop}
 \label{prop:bs}
 There is a one-to-one correspondence between exotic $4$-disks and homotopy $4$-spheres, in the sense that two exotic $4$-disks $X_1$ and $X_2$ are diffeomorphic if and only if the same is true for $\widehat{X_1}$ and $\widehat{X_2}$.
\end{prop}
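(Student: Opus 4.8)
The plan is to show that the assignment $X \mapsto \widehat X$ is well defined on diffeomorphism classes and injective, which is exactly the asserted equivalence; I will also indicate why it is onto, so that one genuinely has a one-to-one correspondence. Throughout, recall that $\widehat X$ is obtained by attaching a $4$-handle along $\partial X = S^3$, and that this operation is independent of the attaching diffeomorphism up to diffeomorphism because $\pi_0(\mathrm{Diff}^+(S^3))$ is trivial by Cerf; in particular $\widehat X$ is a well-defined smooth manifold, and it is homeomorphic to $S^4$ since capping a manifold homeomorphic to $D^4$ with a $4$-handle produces one homeomorphic to $S^4$. Writing $B \subset \widehat X$ for the image of the attached handle, a smoothly embedded $D^4$, I will use that $X \cong \widehat X \setminus \mathring B$.

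For the forward implication, suppose $\phi \colon X_1 \to X_2$ is an orientation-preserving diffeomorphism. It restricts to a diffeomorphism of the boundary $3$-spheres; composing this restriction with the two attaching maps produces an orientation-preserving self-diffeomorphism of $S^3$, which by Cerf is isotopic to the identity and hence extends over the capping $D^4$. Gluing $\phi$ to this extension yields a diffeomorphism $\widehat{X_1} \cong \widehat{X_2}$, so the map is well defined.

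The substantive direction is the converse, and this is the hard part. Let $\Psi \colon \widehat{X_1} \to \widehat{X_2}$ be an orientation-preserving diffeomorphism and let $B_i \subset \widehat{X_i}$ be the capping balls, so that $X_i \cong \widehat{X_i} \setminus \mathring{B_i}$. Then $\Psi(B_1)$ and $B_2$ are two smoothly embedded copies of $D^4$ inside the closed, connected, smooth $4$-manifold $\widehat{X_2}$. By the Cerf-Palais lemma quoted above they are smoothly isotopic, and the isotopy extension theorem upgrades this to an ambient diffeomorphism $\Theta$ of $\widehat{X_2}$, isotopic to the identity, with $\Theta(\Psi(B_1)) = B_2$. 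The composition $\Theta \circ \Psi$ carries $B_1$ onto $B_2$ and therefore restricts to an orientation-preserving diffeomorphism of the complements $\widehat{X_1} \setminus \mathring{B_1} \to \widehat{X_2} \setminus \mathring{B_2}$, that is, $X_1 \cong X_2$.

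To see that the correspondence is onto, given a homotopy $4$-sphere $\Sigma$ I would choose any smoothly embedded ball $B \subset \Sigma$ and set $X = \Sigma \setminus \mathring B$; the topological form of the Cerf-Palais statement (applied to $\Sigma$, which is homeomorphic to $S^4$) shows that $X$ is homeomorphic to $D^4$, and capping recovers $\widehat X \cong \Sigma$. The delicate point throughout is the converse step: one must check that after isotoping $\Psi(B_1)$ onto $B_2$ the resulting map really identifies the complements with $X_1$ and $X_2$ and respects orientations. Thus the whole argument rests on uniqueness of smoothly embedded $4$-balls (Cerf-Palais) together with isotopy extension, rather than on any explicit construction.
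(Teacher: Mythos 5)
Your proof is correct and follows essentially the same route as the paper: the forward direction uses Cerf's theorem that $4$-handle attachment along $S^3$ is unique, and the converse uses the Cerf--Palais uniqueness of smoothly embedded $4$-balls (plus isotopy extension) to match the capping balls and identify the complements. Your additional remarks on surjectivity and orientations are fine but not needed for the stated equivalence, which the paper likewise dispatches by citing its earlier observation on embedded disks in homotopy $4$-spheres.
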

\begin{proof}
 If $X_1\cong X_2$ then clearly $\widehat{X_1}\cong\widehat{X_2}$ because there is a unique way to glue a 4-handle to a manifold with $S^3$ as boundary. 
 The other implication follows from our observation above.
\end{proof}
In accordance with the identification given in Proposition
\ref{prop:bs}, we also say that a homotopy 4-sphere $\widehat X$ is
small or large when the corresponding $X$ is.  If $X\hookrightarrow
S^4$ is a small exotic 4-disk then the same is true for
$S^4\setminus\mathring X$. Hence, we can put the structure of an
abelian group on the set of small homotopy 4-spheres up to
diffeomorphism, where the group operation is given by the boundary sum
$\natural$. Very little is known about this group, except that it has
at most a countable number of elements, see \cite{GS}.

The following is a simple, yet useful
generalization of Proposition~\ref{prop:bs}. 
\begin{lemma}
 \label{lemma:bouquet}
 Suppose that $X_1$ and $X_2$ are two compact smooth $4$-manifolds
 with $\partial X_1\cong\partial X_2\cong\#^nS^1\times S^2$ for some
 $n$; and denote the manifold obtained by gluing $\natural^nS^1\times
 D^3$ to $X_i$ for $i=1,2$ by $M_i$. Then, if $M_i$ are simply
 connected and satisfy $M_1\cong M_2$, then $X_1\cong X_2$.
\end{lemma}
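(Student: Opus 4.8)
The plan is to mimic the proof of Proposition~\ref{prop:bs}, replacing the uniqueness of embedded $4$-disks (supplied there by Cerf--Palais) with a uniqueness statement for embedded copies of $N:=\natural^nS^1\times D^3$. Fix an orientation-preserving diffeomorphism $\Phi\colon M_1\to M_2$, and regard $N_i\hookrightarrow M_i$ as the piece glued to $X_i$, so that $X_i=\overline{M_i\setminus N_i}$ and $\partial N_i=Y$. It suffices to produce a self-diffeomorphism $G\colon M_2\to M_2$, isotopic to the identity, with $G(\Phi(N_1))=N_2$: then $G\circ\Phi$ carries $N_1$ onto $N_2$, hence restricts to an orientation-preserving diffeomorphism $X_1\cong X_2$, which is what we want.

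To compare $\Phi(N_1)$ with $N_2$ I would pass to their cores. The manifold $N$ smoothly deformation retracts onto a wedge $B$ of $n$ circles (its spine), and a smooth embedding of $N$ realizes the image as a smooth regular neighborhood of the embedded spine; write $B_i\subset N_i$ for the image spine, so that $N_i$ is a regular neighborhood of $B_i$. Because $M_2$ is simply connected, every map of the one-dimensional wedge $B$ into $M_2$ is null-homotopic, so the two embedded bouquets $\Phi(B_1)$ and $B_2$ are homotopic. This is the only place where simple connectivity is used, and it plays exactly the role that Cerf--Palais plays in Proposition~\ref{prop:bs}.

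The key point is then that homotopic embeddings of a $1$-complex in a $4$-manifold are isotopic, which is pure general position. Given a homotopy $H\colon B\times[0,1]\to M_2$ rel endpoints between $\Phi(B_1)$ and $B_2$, its same-time self-collisions are the triples $(x,y,t)$ with $x\neq y$ and $H(x,t)=H(y,t)$; these are cut out by $\dim M_2=4$ equations on the $3$-dimensional parameter space $(B\times B)\times[0,1]$, so they generically do not occur. Hence $H$ can be perturbed, keeping the (already embedded) endpoints fixed, to a path of embeddings, i.e.\ an isotopy; the isotopy extension theorem upgrades this to an ambient isotopy of $M_2$ carrying $\Phi(B_1)$ to $B_2$. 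Finally, smooth uniqueness of regular neighborhoods shows that the regular neighborhood $\Phi(N_1)$ of $\Phi(B_1)$ and the regular neighborhood $N_2$ of $B_2$ are ambient isotopic, producing the required $G$.

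The genuinely substantive ingredient is the codimension count $\dim M_2-2\dim B=4-2=2$ underlying the third paragraph: it is precisely the threshold that makes embeddings of a $1$-complex in a $4$-manifold a ``generic, connected'' class, so that homotopic spines are ambient isotopic. Everything else—the reduction to cores, uniqueness of regular neighborhoods, the compatibility of orientations (all maps involved are orientation-preserving), and restricting $G\circ\Phi$ to the complements—is routine. I expect the main obstacle to be stating the core/regular-neighborhood step cleanly enough that the general-position argument for the $1$-complex $B$ (with its wedge vertex) is manifestly legitimate, rather than any deep four-dimensional input.
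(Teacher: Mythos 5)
Your proposal is correct and follows essentially the same route as the paper: the paper's proof also passes to the core bouquets of circles of the two copies of $\natural^nS^1\times D^3$, uses simple connectivity of $M$ to conclude the bouquets are homotopic, and then invokes ``homotopy implies isotopy'' in dimension four (citing Milnor) where you spell out the general-position codimension count, concluding via the (implicit in the paper) uniqueness of regular neighborhoods. The only difference is one of detail: you make explicit the isotopy-extension and regular-neighborhood steps that the paper compresses into ``ultimately providing an isotopy from $X_1$ to $X_2$.''
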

\begin{proof}
The complements of $X_1$ and of $X_2$ in $M=M_1(\cong M_2)$ are both
neighborhoods of bouquets of circles (of the same number, as this
number is equal to $n$). By an isotopy of $M$ we can arrange that the
two bouquets have the same 0-cell. As $M$ is simply connected, the
circles of the bouquets are homotopic to each other. In this
dimension, however, homotopy implies isotopy \cite{Milnor}, ultimately
providing an isotopy from $X_1$ to $X_2$, verifying the claim.
\end{proof}
The trace embedding lemma is one of the most crucial connections
between sliceness properties of knots/links and exotic structures. The
version of this lemma for knots is rather well-known; here we discuss
a straightforward extension to links.
\begin{lemma}[Trace embedding lemma for links]
 \label{lemma:trace}
 A link $L$ in $S^3$ is smoothly slice in a possibly exotic $4$-disk $X$ if and only if $X(L^*)\hookrightarrow\widehat X$, where $\widehat X$ is the homotopy $4$-sphere obtained by attaching a $4$-handle to $X$, $L^*$ is the mirror image of $L$ and $X(L^*)$ is the $0$-trace of $L^*$.
\end{lemma}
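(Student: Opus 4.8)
The plan is to prove both implications from a single geometric picture, in which $\widehat X = X\cup_{S^3}D^4$ is obtained by capping $\partial X=S^3$ with a $4$-handle $H\cong D^4$. The key observation is that the orientation of $S^3$ as $\partial H$ is opposite to its orientation as $\partial X$, and this sign reversal is exactly what turns $L$ into its mirror $L^*$; keeping track of it is what forces the mirror into the statement.

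For the forward direction, suppose $L$ bounds disjoint, properly embedded disks $\Delta=\bigsqcup_i\Delta_i$ in $X$. Each $\Delta_i$ is contractible, so its normal bundle in $X$ is trivial and a tubular neighborhood is $\nu(\Delta)\cong\bigsqcup_i\Delta_i\times D^2$. First I would check that the framing this product structure induces on $L_i=\partial\Delta_i$ is the $0$-framing: pushing $\Delta_i$ off itself along the trivialization gives a disjoint parallel disk $\Delta_i'$, so the framing coefficient equals $\lk(L_i,L_i')=[\Delta_i]\cdot[\Delta_i']=0$, the last equality holding because $X$ is a homotopy disk, whence $H_2(X,\partial X)=0$ and every properly embedded disk is null-homologous. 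Consequently $H\cup\nu(\Delta)$ --- the $4$-ball $H$ with $2$-handles attached along $L\subset\partial H$ with the $0$-framing --- is, by the orientation reversal noted above, a copy of the $0$-trace $X(L^*)$, smoothly embedded in $\widehat X$.

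For the reverse direction, suppose $\iota\colon X(L^*)\hookrightarrow\widehat X$ is a smooth embedding. The image of the $0$-handle of $X(L^*)$ is a smoothly embedded $D^4$ in $\widehat X$, and by the Cerf--Palais uniqueness statement recalled at the start of Section~\ref{sec:prelims} all such embedded $4$-balls are ambiently isotopic. I would use this to isotope $\iota$ so that it carries the $0$-handle of $X(L^*)$ exactly onto the capping $4$-handle $H$. The $2$-handles of $X(L^*)$ are then attached to $H$ along $L^*\subset\partial H$ and protrude into $\widehat X\setminus\mathring H=X$; their cores are disjoint, properly embedded disks in $X$ whose boundaries, read in $\partial X=S^3$ (where $L^*$ appears as $L$), are precisely the components of $L$. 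These cores are the required slice disks, so $L$ is smoothly slice in $X$.

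The conceptual crux, and the step most in need of care, is the orientation bookkeeping that produces the mirror: one must track how the two boundary orientations of the common $S^3$ interact with the chosen orientations of the slice disks and of the $2$-handle cores, and confirm that the $0$-framing is genuinely the one transported in both directions. The remaining technical point is the null-homology argument identifying the disk framing as $0$; everything else reduces either to the standard handle description of $X(L^*)$ or to the already-cited isotopy uniqueness of embedded $4$-balls.
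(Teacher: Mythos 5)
Your proof is correct and follows essentially the same route as the paper: the forward direction realizes $X(L^*)$ as the capping $4$-ball union the tubular neighborhoods $\nu(\Delta_i)$ of the slice disks, and the reverse direction uses the Cerf--Palais uniqueness of embedded $4$-balls to identify the complement of the $0$-handle of $X(L^*)$ with $X$, so that the $2$-handle cores become the slice disks. Your explicit check that the slice disks induce the $0$-framing (via null-homology, using that $X$ is a homotopy ball) is a detail the paper's proof leaves implicit, and you handle it correctly.
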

\begin{proof}
 Suppose that $L$ is smoothly slice in $X$, which means that each
 component $L_i$ of $L$ bounds a properly embedded disk $D_i$ in $X$
 and $D_i\cap D_j=\emptyset$ for $i\neq j$. Take $\widehat X$ and one
 of its handle decompositions in the way that $\widehat
 X=D^4\cup_{S^3}X$. Hence, we can view $L^*$ 
 in $\partial D^4=S^3$ and then
 $D^4\cup\nu(D_1)\cup...\cup\nu(D_n)\cong X(L^*)\hookrightarrow
 X$. Note that since the $D_i$'s are disjoint, we can assume the same
 for the tubular neighborhoods $\nu(D_i)$'s.
 
 We now assume that $X(L^*)\hookrightarrow\widehat X$. We have that $\widehat X\cong
 X(L^*)\cup_fW$, where $W=\widehat X\setminus\mathring{X(L^*)}$ and $f\colon
 \partial X(L^*)\rightarrow \partial X(L^*)$ is the
 orientation-reversing diffeomorphism which acts as gluing
 map. Moreover, we can consider the handle decomposition on $X(L^*)$
 given by $D^4\cup\{\text{2-handles}\}$, where the 2-handles are
 attached along $L^*$ with framing 0.
 
 We see that the link $L^*$ sits in $S^3=\partial D^4$ and it bounds
 the cores of the 2-handles inside $X(L^*)$, which are embedded disks
 with boundary on $S^3$. Since $X\cong\widehat X\setminus\mathring
 D^4$, we obtain precisely that $L=(L^*)^*$ bounds a collection of
 disjoint properly embedded disks in $X$, hence $L$ is slice.
\end{proof}
\begin{remark}
There is also a locally flat version of the trace embedding lemma
which states that $L$ is topologically slice if and only if $X(L)$ is
a locally flat topological submanifold of $S^4$. Its proof proceeds in
the exact same way as the smooth case.
\end{remark}
With these preparations at hand, we are ready to verify that
the set of slice links will not help in detecting small exotic 4-disks.
\begin{prop}
 \label{prop:inclusion}
 If two possibly exotic $4$-disks are such that $X_1\hookrightarrow X_2$ then $\mathcal S_{X_1}\subset\mathcal S_{X_2}$.
\end{prop}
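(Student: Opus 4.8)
The plan is to pass through the trace embedding lemma, which converts sliceness into the existence of an embedding of a $0$-trace and thereby removes the need to track how the two boundary spheres are identified. Fix a smooth embedding $X_1\hookrightarrow X_2$ and let $L\in\mathcal S_{X_1}$. By Lemma~\ref{lemma:trace} this is the same as having a smooth embedding $X(L^*)\hookrightarrow\widehat{X_1}$, and it suffices to produce an embedding $X(L^*)\hookrightarrow\widehat{X_2}$, since Lemma~\ref{lemma:trace} applied to $X_2$ will then give $L\in\mathcal S_{X_2}$.

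The key step, and the only delicate one, is to improve $X(L^*)\hookrightarrow\widehat{X_1}$ to an embedding into $X_1$ itself. The point is that $X(L^*)$ is a compact codimension-$0$ submanifold of the closed manifold $\widehat{X_1}$ with nonempty boundary $S^3_{(0,\ldots,0)}(L^*)$, so its complement $W_1=\widehat{X_1}\setminus\mathring{X(L^*)}$ has nonempty interior. I would choose a smoothly embedded closed $4$-ball $B\subset\mathring{W_1}$; it is then disjoint from $X(L^*)$. By the Cerf--Palais lemma recalled at the start of this section, all smoothly embedded $4$-balls in $\widehat{X_1}$ are ambiently isotopic, and since deleting the interior of the defining $4$-handle of $\widehat{X_1}=X_1\cup_{S^3}D^4$ returns $X_1$, we get $\widehat{X_1}\setminus\mathring B\cong X_1$. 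As $X(L^*)$ avoids $\mathring B$, this yields $X(L^*)\hookrightarrow X_1$.

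It remains to compose embeddings: $X(L^*)\hookrightarrow X_1\hookrightarrow X_2\hookrightarrow\widehat{X_2}$, the last inclusion being the attachment of the $4$-handle. This gives the desired $X(L^*)\hookrightarrow\widehat{X_2}$, and the reverse direction of Lemma~\ref{lemma:trace} recovers disjoint, properly embedded slice disks for $L$ in $X_2$ with boundary $L$ on $\partial X_2=S^3$, so $L\in\mathcal S_{X_2}$. As $L$ was arbitrary, $\mathcal S_{X_1}\subset\mathcal S_{X_2}$. I expect the main obstacle to be exactly the reduction in the second paragraph: a naive argument would push the slice disks of $L$ forward under $X_1\hookrightarrow X_2$, but their common boundary would then lie on the interior $3$-sphere $f(\partial X_1)$ rather than on $\partial X_2$, and there is no reason the smooth cobordism between these two spheres is a product. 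Routing through the $0$-trace and the uniqueness of embedded balls is what sidesteps this matching problem.
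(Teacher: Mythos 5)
Your proof is correct and follows the same route as the paper: the paper also upgrades the trace embedding lemma (via the uniqueness of embedded $4$-balls, i.e.\ Proposition~\ref{prop:bs}/Cerf--Palais) to the statement that $L$ is smoothly slice in $X$ if and only if $X(L^*)\hookrightarrow X$, and then concludes by composing $X(L^*)\hookrightarrow X_1\hookrightarrow X_2$. Your second paragraph simply spells out, correctly, the ball-in-the-complement argument that the paper leaves implicit in that restatement.
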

\begin{proof}
 We can apply Proposition \ref{prop:bs} to restate the trace embedding lemma as $L$ is smoothly slice in $X$ if and only if $X(L^*)\hookrightarrow X$. Therefore, we have $X(L^*)\hookrightarrow X_1\hookrightarrow X_2$ whenever $L$ is smoothly slice in $X_1$.
\end{proof}
\begin{proof}[Proof of Proposition~\ref{prop:smallexotic}]
 We have $D^4\hookrightarrow X\hookrightarrow D^4$, implying $\mathcal
 S_{D^4}\subset \mathcal S_X\subset\mathcal S_{D^4}$, which concludes
 the proof.
\end{proof}

\section{Traces of links and homotopy 4-spheres}
\label{sec:traces}
It is known \cite{Milnor} that every compact, simply connected, smooth $n$-manifold with $n\geq 5$ is geometrically simply connected; in other words, it admits a handle decomposition where there
are no 1-handles.  On the other hand, whether the same holds in
dimension four is still unknown, even for homotopy 4-spheres and
disks. Next we turn to the proof of our result on geometrically
simply connected possibly exotic 4-disks.

\begin{proof}[Proof of Theorem~\ref{thm:GSCPC}]
 First, we observe that $S$ is necessarily a homotopy 4-sphere. In
 fact, since it is closed and simply connected, by Freedman's
 classification result \cite{Freedman} we only need to check that
 $H_2(S;\Z)\cong\{0\}$, but this follows from the observation
 that \[b_2(S)=\chi(S)-2=\#|\text{2-handles}|-\#|\text{3-handles}|=n-n=0\:.\]
 If $S$ is exotic and $X(L)\cong\natural^nS^2\times D^2$ then we have
 a contradiction because $S$ would have a Kirby presentation
 consisting of just some $(2,3)$-cancelling pairs.  Let us assume now
 that $S\cong S^4$. Then we can apply Lemma \ref{lemma:bouquet} to
 prove that $X(L)$ is diffeomorphic to $\natural^n S^2\times D^2$ and
 this shows the other implication.
 
 Finally, given a geometrically simply connected homotopy 4-sphere, we
 take $L$ as the framed link which presents its 2-handles. Then $L$
 has as many components as there are 3-handles because of  Euler
 characteristic, and framings zero as $H_1(Y;\Z)\cong\Z^{|L|}$.
\end{proof}
This result leads us to examine exoticness on $S^2\times D^2$ as well.
\begin{thm}
 \label{thm:correspondence}
 There is a one-to-one correspondence between exotic $4$-spheres and exotic $S^2\times D^2$'s, up to diffeomorphism. In particular, the smooth $4$-dimensional Poincar\'e conjecture is equivalent to the existence of an exotic $S^2\times D^2$. 
\end{thm}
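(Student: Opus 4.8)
The plan is to construct two mutually inverse maps $\Phi$ and $\Psi$ between diffeomorphism classes of exotic $S^2\times D^2$'s and of exotic $4$-spheres, and then deduce the Poincar\'e statement. Given an exotic $Z$ (smooth, homeomorphic but not diffeomorphic to $S^2\times D^2$), its boundary is a homotopy $S^1\times S^2$, hence is diffeomorphic to $S^1\times S^2$ since closed $3$-manifolds carry unique smooth structures; I set $\Phi(Z)=\widehat Z=Z\cup_{S^1\times S^2}(S^1\times D^3)$, using the gluing for which the result is simply connected, modelled on $S^4=(S^2\times D^2)\cup_{S^1\times S^2}(S^1\times D^3)$. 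Conversely, given an exotic $4$-sphere $S$, I pick a smoothly embedded circle $\gamma\subset S$, take a tubular neighbourhood $\nu(\gamma)\cong S^1\times D^3$ (its normal bundle is trivial, being an oriented bundle over $S^1$), and set $\Psi(S)=S\setminus\mathring\nu(\gamma)$. Note that $\Psi$ only removes a single $1$-handle neighbourhood, so unlike Theorem~\ref{thm:GSCPC} no hypothesis on handle decompositions is needed.

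First I would pin down the homeomorphism types and well-definedness. A van Kampen argument shows $\widehat Z$ is simply connected and a Mayer--Vietoris argument that it is a homology $4$-sphere, so by Freedman \cite{Freedman} it is homeomorphic to $S^4$. For $\Psi$, since $S$ is simply connected any two embedded circles are homotopic and hence, in dimension four, isotopic \cite{Milnor}, so $\Psi(S)$ is independent of $\gamma$ up to diffeomorphism; and because $\gamma$ is a locally flat, therefore topologically unknotted, circle in $S\cong_{\mathrm{top}}S^4$, its complement $\Psi(S)$ is homeomorphic to $S^2\times D^2$.

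The preservation of exoticness is where Lemma~\ref{lemma:bouquet} does the work. If $\widehat Z\cong S^4$, then applying Lemma~\ref{lemma:bouquet} with $X_1=Z$, $X_2=S^2\times D^2$ and $M_1=\widehat Z\cong S^4=M_2$ forces $Z\cong S^2\times D^2$; contrapositively, $Z$ exotic implies $\widehat Z$ exotic, so $\Phi$ lands among exotic $4$-spheres. Granting that $\Phi$ is well defined on diffeomorphism classes, $Z\cong S^2\times D^2$ would give $\widehat Z\cong\widehat{S^2\times D^2}=S^4$, whence $S$ exotic implies $\Psi(S)$ exotic. That $\Phi$ and $\Psi$ are mutually inverse follows by tracking cores: $\Psi(\widehat Z)$ deletes a neighbourhood of the core of the attached $S^1\times D^3$ and returns $Z$, while $\Phi(\Psi(S))$ reglues a neighbourhood isotopic to the one removed and returns $S$, both invocations using the uniqueness statements above. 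The equivalence with 4SPC is then immediate, since an exotic $S^2\times D^2$ exists if and only if an exotic $4$-sphere does, i.e.\ if and only if 4SPC fails.

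I expect the main obstacle to be the well-definedness of $\Phi$, namely that $\widehat Z$ does not depend on the diffeomorphism of $S^1\times S^2$ chosen to attach $S^1\times D^3$. This reduces to showing that the mapping class group of $S^1\times S^2$ is generated by diffeomorphisms that extend over $S^1\times D^3$: the reflections of the two factors extend in the obvious way, while the Gluck twist $(\theta,x)\mapsto(\theta,R_\theta x)$ extends by letting the loop $R_\theta\in SO(3)$ act on $D^3$ instead of only on $S^2$. Granting this, any two admissible gluings differ by an extendable diffeomorphism and hence yield diffeomorphic manifolds, so $\Phi$ descends to diffeomorphism classes and the identity $\Phi(\Psi(S))\cong S$ in the previous paragraph is justified.
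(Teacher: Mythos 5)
Your proof is correct, and its skeleton is the same as the paper's: in both arguments the correspondence is given by gluing, respectively removing, a copy of $S^1\times D^3$, and Lemma~\ref{lemma:bouquet} is the engine that shows exoticness is preserved and that the assignment is injective. The differences lie in how the remaining steps are discharged, and your choices are worth comparing. For the inverse map, the paper removes a $3$-handle/$4$-handle pair from a handle decomposition of $S$ (this pair, turned upside down, is exactly an $S^1\times D^3$), whereas you remove a tubular neighbourhood of an arbitrary smoothly embedded circle and obtain well-definedness from homotopy-implies-isotopy \cite{Milnor}; the two are equivalent, but your version sidesteps the minor issue of arranging a decomposition that actually contains a $3$-handle. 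For the homeomorphism type of the complement, the paper re-runs its Lemma~\ref{lemma:bouquet} argument in the topological category, while you appeal to topological unknottedness of locally flat circles in $S^4$; both rest on topological-category input of Freedman--Quinn type, so neither is more elementary than the other. Finally, where the paper asserts that $X\cong X'$ ``obviously'' gives $S\cong S'$ --- a step that really needs the fact that every self-diffeomorphism of $S^1\times S^2$ extends over $S^1\times D^3$, a result of Laudenbach--Po\'enaru \cite{LP} which the paper only invokes later, in Section~\ref{sec:frames} --- you prove the extension property directly by exhibiting generators of the mapping class group of $S^1\times S^2$ and extending each one. This is a genuine gain in explicitness, but note that your argument only verifies that those particular classes extend; that the two reflections and the rotation generate $\pi_0\mathrm{Diff}(S^1\times S^2)$ is Gluck's theorem and should be cited as such (or replaced outright by the citation of \cite{LP} in the $k=1$ case).
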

\begin{proof}
 Given an exotic $S^2\times D^2$ (say $X$), we obtain a homotopy 4-sphere $S$ by
 gluing $S^1\times D^3$ to $X$; and $S$ is exotic because of Lemma
 \ref{lemma:bouquet}. If $X\cong X'$ then obviously $S\cong S'$, while
 the converse is again true by Lemma~\ref{lemma:bouquet}.
 
 To see that this identification is surjective we start by an exotic
 homotopy 4-sphere $S$; then we consider a handle decomposition of $S$
 and we take $X$ as the manifold obtained by removing one 3-handle and
 one 4-handle from $S$.  The fact that such an $X$ is homeomorphic to
 $S^2\times D^2$ follows from the argument we used in the previous
 paragraph.
\end{proof}
Our goal now is to prove the converse of Proposition \ref{prop:inclusion} for geometrically simply connected exotic 4-disks.
\begin{prop}[Fake $(2,3)$-cancelling pair]
 \label{prop:fake}
 Given a compact, simply connected, smooth $4$-manifold $X$ with $\partial X=S^3$, admitting a handle decomposition given by $X=W\cup_{\#^kS^1\times S^2}\{k\:\:3-\text{handles}\}$, we have that $X\:\natural^kS^2\times D^2$ is diffeomorphic to $W$. 
\end{prop}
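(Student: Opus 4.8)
The plan is to deduce the statement from Lemma~\ref{lemma:bouquet}. First I would record that both manifolds in play have boundary $\#^kS^1\times S^2$: for $W$ this is the hypothesis, while $\partial(X\,\natural^k S^2\times D^2)=\partial X\#\#^kS^1\times S^2=S^3\#\#^kS^1\times S^2=\#^kS^1\times S^2$. Setting $X_1=W$, $X_2=X\,\natural^k S^2\times D^2$ and $n=k$, Lemma~\ref{lemma:bouquet} reduces the Proposition to showing that the two cappings $M_i=X_i\cup\natural^k S^1\times D^3$ are simply connected and satisfy $M_1\cong M_2$. I would in fact prove the stronger statement that $M_1\cong M_2\cong\widehat X$, the homotopy $4$-sphere obtained by closing $X$ up with a $4$-handle; since $\widehat X$ is a homotopy sphere it is simply connected, so both hypotheses of Lemma~\ref{lemma:bouquet} hold and the claim follows. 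Throughout I use that gluing $\natural^k S^1\times D^3$ along a $\#^kS^1\times S^2$ boundary yields a well-defined manifold, independent of the gluing diffeomorphism (this is already implicit in Lemma~\ref{lemma:bouquet}, and is the content of Laudenbach--Po\'enaru), so that each $M_i$ may be computed with a convenient handle presentation.

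For $M_1$ I would turn the cap upside down: a collar on $\#^kS^1\times S^2$ together with $k$ $3$-handles attached along a system of spheres reducing the boundary to $S^3$, capped off by a $4$-handle, is diffeomorphic to $D^4\cup\{k\ \text{1-handles}\}=\natural^k S^1\times D^3$. Choosing the gluing so that these $3$-handles are attached precisely along the spheres $\Sigma_1,\dots,\Sigma_k\subset\partial W$ defining the $3$-handles of $X$ (a valid reducing system exactly because $\partial X=S^3$), I obtain $M_1=W\cup\{k\ \text{3-handles along }\Sigma_i\}\cup\{\text{4-handle}\}=X\cup\{\text{4-handle}\}=\widehat X$.

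For $M_2$ I would exploit the $2$-handles coming from the $S^2\times D^2$ summands. Reading $X\,\natural^k S^2\times D^2$ as $X$ with $k$ $0$-framed $2$-handles $h_1,\dots,h_k$ attached in a ball of $\partial X=S^3$, the capping adds dually $k$ $3$-handles $H_1',\dots,H_k'$ and a $4$-handle, so $M_2=X\cup\{h_i\}\cup\{H_i'\}\cup\{\text{4-handle}\}$. Here is the one delicate point: the $h_i$ do \emph{not} cancel the original $3$-handles of $X$, since the belt circle of $h_i$ is disjoint from $\Sigma_i$ (their intersection number in the relevant $\#^{2k}S^1\times S^2$ is zero). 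Instead, using the freedom in the gluing of the cap, I would attach each $H_i'$ along the $2$-sphere dual to the belt circle $\beta_i$ of $h_i$ inside $\partial(X\,\natural^k S^2\times D^2)=\#^kS^1\times S^2$, so that the attaching sphere of $H_i'$ meets $\beta_i$ transversely in a single point. Each pair $(h_i,H_i')$ is then a genuine $(2,3)$-cancelling pair, and cancelling all $k$ of them leaves $X\cup\{\text{4-handle}\}=\widehat X$. Hence $M_2\cong\widehat X\cong M_1$, and Lemma~\ref{lemma:bouquet} gives $X\,\natural^k S^2\times D^2\cong W$. The main obstacle is exactly this last cancellation: recognising that the artificially introduced $2$-handles pair not with the $3$-handles already present in $X$ but with those produced by the cap, which is what forces one to invoke the well-definedness of the $\natural^k S^1\times D^3$-filling in order to position the capping $3$-handles dually to the $\beta_i$.
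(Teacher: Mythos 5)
Your proposal is correct and takes essentially the same approach as the paper, whose entire proof is the observation that both $W$ and $X\,\natural^k S^2\times D^2$ become diffeomorphic to the simply connected manifold $\widehat X$ after gluing $\natural^k S^1\times D^3$, whence Lemma~\ref{lemma:bouquet} applies. Your write-up merely makes explicit the handle-theoretic details the paper leaves implicit: reading the cap upside down as the $3$- and $4$-handles of $\widehat X$ for $M_1$, and pairing the new $2$-handles with the cap's $3$-handles as genuine $(2,3)$-cancelling pairs for $M_2$.
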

\begin{proof}
 It is a direct application of Lemma \ref{lemma:bouquet}. In fact, both $W$ and $X\:\natural^kS^2\times D^2$ become diffeomorphic to the closed and simply connected 4-manifold $\widehat X$ after attaching $\natural^kS^1\times D^3$.
\end{proof}
\begin{thm}
 \label{thm:slice_links}
 Two possibly exotic geometrically simply connected $4$-disks
 $X_1,X_2$ satisfy $X_1\hookrightarrow X_2$ if and only if $\mathcal
 S_{X_1}\subset\mathcal S_{X_2}$. In particular, a geometrically
 simply connected $4$-disk $X$ is small if and only if $\mathcal
 S_X=\mathcal S_{D^4}$.  
\end{thm}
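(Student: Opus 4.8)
The forward implication $X_1\hookrightarrow X_2\Rightarrow\mathcal S_{X_1}\subset\mathcal S_{X_2}$ is exactly Proposition~\ref{prop:inclusion}, so the plan is to prove the converse. The strategy is to attach to each geometrically simply connected disk a \emph{canonical slice link}, run it through the hypothesis $\mathcal S_{X_1}\subset\mathcal S_{X_2}$, and recover the embedding via the trace embedding lemma. Concretely, I would find a link $J$ that is slice in $X_1$ and whose $0$-trace is diffeomorphic to $X_1\:\natural^nS^2\times D^2$; the hypothesis then makes $J$ slice in $X_2$, and the trace embedding lemma converts this into $X_1\hookrightarrow X_2$.

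To produce $J$, I would fix a handle decomposition of $X_1$ with no $1$-handles. Since $\partial X_1=S^3$ there is no $4$-handle, connectedness leaves a single $0$-handle, and $\chi(X_1)=1$ forces the number of $2$-handles to equal the number of $3$-handles, say $n$. Let $W_1$ be the sub-handlebody obtained after the $2$-handles but before the $3$-handles, so that $X_1=W_1\cup\{n\text{ 3-handles}\}$ and, tautologically, $W_1\hookrightarrow X_1$ as a codimension-$0$ submanifold. Applying Theorem~\ref{thm:GSCPC} to the homotopy sphere $\widehat{X_1}$ identifies the attaching link $L_1$ of the $2$-handles as an $n$-component $0$-framed link with $S^3_{(0,\ldots,0)}(L_1)\cong\#^nS^1\times S^2$, and identifies $W_1$ with the $0$-trace $X(L_1)$. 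Proposition~\ref{prop:fake} then yields $X(L_1)=W_1\cong X_1\:\natural^nS^2\times D^2$.

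The step I expect to be the conceptual crux is the embedding $X_1\:\natural^nS^2\times D^2\cong X(L_1)=W_1\hookrightarrow X_1$: although $X_1\:\natural^nS^2\times D^2$ looks strictly larger than $X_1$, it occurs as the genuine sub-handlebody $W_1$ lying inside $X_1$ below the $3$-handles, which is what turns the abstract diffeomorphism of Proposition~\ref{prop:fake} into an actual inclusion. Granting this, set $J=L_1^*$. Using the form of the trace embedding lemma (Lemma~\ref{lemma:trace}) recorded in the proof of Proposition~\ref{prop:inclusion} (that $L$ is slice in $X$ precisely when $X(L^*)\hookrightarrow X$), the identity $X(J^*)=X(L_1)\hookrightarrow X_1$ gives $J\in\mathcal S_{X_1}$. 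By hypothesis $J\in\mathcal S_{X_2}$, and the same lemma applied to $X_2$ returns $X(L_1)\hookrightarrow X_2$. Composing the evident inclusion $X_1\hookrightarrow X_1\:\natural^nS^2\times D^2\cong X(L_1)$ with this embedding produces $X_1\hookrightarrow X_2$, completing the converse. (Note that this direction only uses geometric simple connectivity of $X_1$.)

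For the final assertion I would apply the equivalence twice. Taking $X_2=D^4$ gives $X\hookrightarrow D^4\iff\mathcal S_X\subset\mathcal S_{D^4}$, i.e. $X$ is small iff $\mathcal S_X\subset\mathcal S_{D^4}$. On the other hand $D^4\hookrightarrow X$ holds for every exotic disk via a smooth chart, so Proposition~\ref{prop:inclusion} forces $\mathcal S_{D^4}\subset\mathcal S_X$ unconditionally. Combining the two inclusions, $X$ is small exactly when $\mathcal S_X=\mathcal S_{D^4}$.
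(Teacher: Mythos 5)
Your proposal is correct and follows essentially the same route as the paper: both directions hinge on Proposition~\ref{prop:inclusion}, the trace embedding lemma applied to the link presenting the $2$-handles of $X_1$, and Proposition~\ref{prop:fake} identifying that trace with $X_1\:\natural^nS^2\times D^2$, from which $X_1\hookrightarrow X(L_1)\hookrightarrow X_2$ follows. The only cosmetic differences are that you route the identification $W_1\cong X(L_1)$ (with zero framings) explicitly through Theorem~\ref{thm:GSCPC}, and you phrase the conclusion as a composition of embeddings where the paper draws the equivalent picture $X_2=(X_1\:\natural^nS^2\times D^2)\cup_{\#^nS^1\times S^2}Z$.
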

\begin{proof}
 We only need to prove the 'if' implication because of Proposition
 \ref{prop:inclusion}. Let us consider an $n$-component link $L$ which
 presents the 2-handles of $X_1$; since $X(L)\hookrightarrow X_1$ one has that $L^*$
 is smoothly slice in $X_1$, and by assumption, also in $X_2$. Using
 the trace embedding lemma again, we then obtain that
 $X(L)\hookrightarrow X_2$.
 
 We saw that $X_2=X(L)\cup_{\#^nS^1\times S^2}Z$, where $Z=X_2\setminus\mathring{X(L)}$. We recall that Proposition \ref{prop:bs} assures us that we can take $X(L)\cap\nu(\partial X_2)=\emptyset$. 
 \begin{figure}[ht]
 \centering
 \def\svgwidth{10cm}
 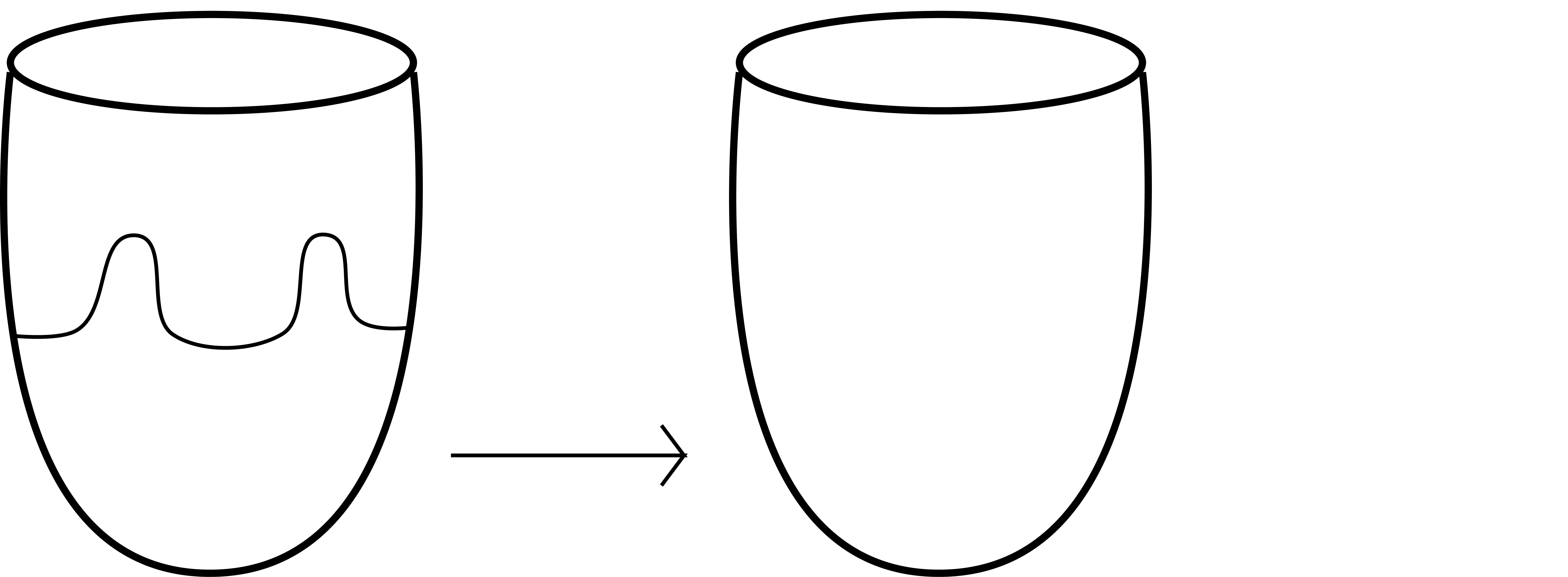   
 \caption{Since $X_1\:\natural^nS^2\times D^2\cong X(L)$, we can
   surger $X(L)$ from $X_2$ and then glue it back using the
   diffeomorphism.}
 \label{Inclusion}
 \end{figure}
 Now we use Proposition \ref{prop:fake} to claim that $X_1\:\natural^nS^2\times D^2\cong X(L)$; hence, we have $X_2=(X_1\:\natural^nS^2\times D^2)\cup_{\#^nS^1\times S^2}Z$ which means $X_1\hookrightarrow X_2$. See Figure \ref{Inclusion}.
\end{proof}
A few  observations are in place. The first is that the
latter statement necessarily requires links: in fact, in the proof we
used the fact that a geometrically simply connected exotic 4-disk has
a Kirby presentation which consists of a 0-framed link $L$. Such an
$L$ cannot be a knot because by Gabai's theorem (\cite{Gabai}) it would be the unknot. The second observation is
that, in the proof of Theorem \ref{thm:slice_links}, we did not
actually used that every link in $\mathcal S_{X_1}$ is contained in
$\mathcal S_{X_2}$, but just that a link which presents the 2-handles
of $X_1$ is. This is useful to prove the following corollary.

\begin{cor}
 \label{cor:large}
Every geometrically simply connected large exotic $4$-disk $X$ is
obtained by attaching $n$ $3$-handles on $X(J)$ for some $n$-component
link $J$, not smoothly slice in $D^4$ and such that
$S^3_{(0,...,0)}(J)\cong\#^nS^1\times S^2$.
\end{cor}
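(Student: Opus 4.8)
The plan is to take for $J$ the framed link that presents the $2$-handles of $X$ and to read off its three required properties directly from the machinery already in place. First I would pass to the closed manifold $\widehat X$, which is a geometrically simply connected exotic homotopy $4$-sphere, and apply Theorem~\ref{thm:GSCPC} to it. This produces an $n$-component link $J$ presenting the $2$-handles, with all framings zero, such that $Y:=S^3_{(0,\dots,0)}(J)\cong\#^nS^1\times S^2$ and $\widehat X=X(J)\cup_Y\natural^nS^1\times D^3$. In particular the condition $S^3_{(0,\dots,0)}(J)\cong\#^nS^1\times S^2$ is immediate.

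Next I would recover the handle structure of $X$ itself. Viewing $\natural^nS^1\times D^3$ through the gluing along $Y$ and turning it upside down exhibits it as $n$ $3$-handles followed by a single $4$-handle; since $\widehat X$ is obtained from $X$ by attaching one $4$-handle, deleting that top handle leaves exactly $X=X(J)\cup\{n\ 3\text{-handles}\}$. This is the asserted description of $X$ as $X(J)$ with $n$ three-handles attached, and it simultaneously puts $X$ in the form required by Proposition~\ref{prop:fake}, which then gives $X\,\natural^nS^2\times D^2\cong X(J)$.

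It remains to show that $J$ is not smoothly slice in $D^4$, and this is the step I expect to be the main obstacle, since it is where largeness of $X$ has to be converted into a statement about links. I would argue by contradiction: if $J$ were slice in $D^4$ then, as sliceness in $D^4$ is invariant under mirroring, $J^*$ would be slice as well, and the trace embedding lemma (Lemma~\ref{lemma:trace}, in the form of Proposition~\ref{prop:inclusion}) would yield $X(J)\hookrightarrow D^4$. Here I would use the strengthened reading of the proof of Theorem~\ref{thm:slice_links} isolated in the preceding remark: because $J$ presents the $2$-handles of $X$, one needs not the full inclusion $\mathcal S_X\subset\mathcal S_{D^4}$ but only the embedding $X(J)\hookrightarrow D^4$. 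Writing $D^4=X(J)\cup_Y Z$ with $Z=D^4\setminus\mathring{X(J)}$ and surgering $X(J)$ out, I would glue back the diffeomorphic piece $X\,\natural^nS^2\times D^2$ supplied by Proposition~\ref{prop:fake}, obtaining $X\hookrightarrow D^4$, i.e.\ that $X$ is small. This contradicts the hypothesis that $X$ is large, so $J$ cannot be slice in $D^4$.

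I would finally note that no independent appeal to the exoticness of $X$ is required beyond largeness: a large $4$-disk already fails to embed in $D^4$ and hence cannot be diffeomorphic to it, so the three listed properties of $J$ complete the proof.
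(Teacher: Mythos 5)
Your proof is correct and follows essentially the same route as the paper: the paper's (very terse) proof likewise takes $J$ to be the link presenting the $2$-handles, assumes $J$ is slice in $D^4$, and ``mimics the proof of Theorem~\ref{thm:slice_links}'' --- i.e.\ the trace embedding lemma followed by the Proposition~\ref{prop:fake} surger-and-reglue argument --- to embed $X$ in $D^4$, contradicting largeness. Your explicit treatment of the mirror-invariance of sliceness in $D^4$ and of the existence and properties of $J$ via Theorem~\ref{thm:GSCPC} merely fills in details the paper leaves implicit.
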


\begin{proof}
  We use the previous observation in the following way: assume the
  link $J$ is also smoothly slice in $D^4$, then we can mimic the
  proof of Theorem \ref{thm:slice_links} to show that
  $X\hookrightarrow D^4$. This is a contradiction because $X$ is
  large.
\end{proof}
We finally show an even deeper relation between exotic 4-disks and
exotic boundary sums of $S^2\times D^2$'s. First, we fix the notation
that two such manifolds $X_1$ and $X_2$ are called \emph{stably
  diffeomorphic} if $X_2\cong X_1\:\natural^nS^2\times D^2$ for some $n$.
\begin{thm}
 \label{thm:number}
 The number of geometrically simply connected exotic $4$-spheres, up to diffeomorphism, coincides with the sum of the numbers of exotic $\natural^iS^2\times D^2$ for $i\geq1$ which arise as $0$-traces of links in $S^3$, up to stable diffeomorphism. Equivalently, this is the number of diffeomorphism types of $S^2\times D^2$'s which are stably diffeomorphic to $0$-traces of links in $S^3$.
\end{thm}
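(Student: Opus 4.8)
The plan is to establish a bijection between the two collections by exploiting the constructions already developed in Theorem~\ref{thm:GSCPC} and Theorem~\ref{thm:correspondence}. First I would set up the map from geometrically simply connected exotic $4$-spheres to the relevant class of $S^2\times D^2$'s. Given such a sphere $S$, Theorem~\ref{thm:GSCPC} tells us that $S=X(L)\cup_Y\natural^nS^1\times D^3$ for some $n$-component link $L$ with $S^3_{(0,\dots,0)}(L)\cong\#^nS^1\times S^2$, and that $S$ is exotic precisely when $X(L)\not\cong\natural^nS^2\times D^2$. The manifold $X(L)$ is the $0$-trace of a link, it is simply connected with boundary $\#^nS^1\times S^2$, and filling it with $\natural^nS^1\times D^3$ recovers $S$; hence $X(L)$ is homeomorphic but not diffeomorphic to $\natural^nS^2\times D^2$. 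This realizes $S$ as arising from an exotic $\natural^nS^2\times D^2$ (in the sense of the theorem statement, an exotic $\natural^iS^2\times D^2$ with $i=n$) that is itself a $0$-trace of a link.

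Next I would verify that this assignment is well-defined and injective at the level of diffeomorphism types, after passing to stable diffeomorphism on the target side. The key input is Lemma~\ref{lemma:bouquet}: if two such traces $X(L)$ and $X(L')$ give diffeomorphic spheres $S\cong S'$, then since both are simply connected and both become $S\cong S'$ after gluing $\natural^nS^1\times D^3$, the lemma forces $X(L)\cong X(L')$. The subtlety is the number of components: $L$ and $L'$ need not have the same $n$, so the fillings use different numbers of $1$-handles. This is exactly where stable diffeomorphism enters. Using Proposition~\ref{prop:fake} (the fake $(2,3)$-cancelling pair), attaching a cancelling $(2,3)$-pair corresponds to replacing $X(L)$ by $X(L)\:\natural\,S^2\times D^2$ while leaving $S$ unchanged, so two traces yielding the same sphere differ by such stabilizations and are therefore stably diffeomorphic. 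Conversely, stably diffeomorphic traces fill to diffeomorphic spheres, because the extra $S^2\times D^2$ summands are absorbed by additional $(2,3)$-cancelling pairs. This shows the correspondence descends to a bijection between diffeomorphism types of exotic spheres and stable-diffeomorphism classes of the exotic traces, giving the first equality.

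For the surjectivity direction, I would start from an exotic $\natural^nS^2\times D^2$ which arises as a $0$-trace $X(L)$ of some link $L$; then $S^3_{(0,\dots,0)}(L)\cong\#^nS^1\times S^2$ automatically, and gluing $\natural^nS^1\times D^3$ produces, by Theorem~\ref{thm:GSCPC}, a geometrically simply connected homotopy $4$-sphere that is exotic precisely because $X(L)\not\cong\natural^nS^2\times D^2$. This confirms every exotic trace in the target corresponds to a genuine exotic sphere. Finally, for the ``Equivalently'' clause I would observe that the relevant $\natural^iS^2\times D^2$'s are, up to stable diffeomorphism, detected already at the level of $i=1$: the argument of Theorem~\ref{thm:correspondence} shows that removing a $(3,4)$-handle pair from an exotic sphere produces an exotic $S^2\times D^2$, and conversely, so every stable class of an exotic trace contains a representative stably diffeomorphic to a single $S^2\times D^2$. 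Equating the two counts then reduces to matching these representatives.

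The main obstacle I expect is the careful bookkeeping of the stabilization equivalence, namely proving that two $0$-traces fill to the same sphere \emph{if and only if} they are stably diffeomorphic, rather than merely diffeomorphic. The forward direction (same sphere $\Rightarrow$ stably diffeomorphic) requires controlling how different handle decompositions of a fixed $S$ with varying numbers of $2$- and $3$-handles can be related by $(2,3)$-handle slides and cancellations; isolating the trace part of the decomposition and showing that the ambiguity is exactly boundary-summing with copies of $S^2\times D^2$ is the technical heart, and it is precisely here that Proposition~\ref{prop:fake} and Lemma~\ref{lemma:bouquet} must be combined with care to avoid conflating stable and unstable diffeomorphism.
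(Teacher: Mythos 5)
Your proposal is correct and follows essentially the same route as the paper: the identification comes from Theorems~\ref{thm:GSCPC} and~\ref{thm:correspondence}, well-definedness and injectivity of the correspondence from Proposition~\ref{prop:fake} and Lemma~\ref{lemma:bouquet} (what the paper summarizes as ``Kirby calculus''), surjectivity from Theorem~\ref{thm:GSCPC}, and the ``Equivalently'' clause from Theorem~\ref{thm:correspondence} together with the stabilization argument that the paper isolates as Lemma~\ref{lemma:stably}. Your unpacking of the stable-diffeomorphism bookkeeping via two applications of Proposition~\ref{prop:fake} is exactly the content of the paper's Lemma~\ref{lemma:stably}, just inlined rather than cited.
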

In contrast, by Wall's theorem
\cite{Wall} (see also \cite{GS} for more details),
for every homotopy 4-sphere $S$ there is
an $N\geq0$ such that \[S\:\#^N(S^2\times S^2)\cong\#^N(S^2\times
S^2).\] 

The proof of Theorem \ref{thm:number} requires a preliminary lemma.
\begin{lemma}
 \label{lemma:stably}
 Every exotic $\natural^{n+1}S^2\times D^2$ for some $n\geq1$ is stably diffeomorphic to an exotic $S^2\times D^2$.
\end{lemma}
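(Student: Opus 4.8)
The plan is to realise the desired exotic $S^2\times D^2$ as a large piece of the homotopy $4$-sphere naturally associated to the given exotic $\natural^{n+1}S^2\times D^2$, and then to recover the latter back from it by a fake cancellation of $3$-handles against boundary sums with $S^2\times D^2$, in the spirit of Proposition~\ref{prop:fake} (which, however, does not apply verbatim since here the relevant boundaries are not $S^3$).

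First I would start from an exotic $Y$, homeomorphic but not diffeomorphic to $\natural^{n+1}S^2\times D^2$, whose boundary is $\#^{n+1}S^1\times S^2$. Capping it off with $\natural^{n+1}S^1\times D^3$ produces a closed, simply connected smooth $4$-manifold $S$ which, by Lemma~\ref{lemma:bouquet}, is a homotopy $4$-sphere; moreover $S$ is exotic exactly because $Y$ is, since $Y\cong\natural^{n+1}S^2\times D^2$ would force $S\cong S^4$ and conversely. Viewing this cap as contributing, dually, $n+1$ $3$-handles and a single $4$-handle to $S$, I would then set $Z:=Y\cup\{n\text{ }3\text{-handles}\}$, obtained by deleting one $3$-handle and the $4$-handle from $S$. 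By the argument in the proof of Theorem~\ref{thm:correspondence}, $Z$ is homeomorphic to $S^2\times D^2$ (glue $S^1\times D^3$ back and apply Lemma~\ref{lemma:bouquet}), and it is exotic because $Z\cong S^2\times D^2$ would again give $S\cong S^4$. This $Z$ is the exotic $S^2\times D^2$ we are after.

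It then remains to check that $Y$ and $Z$ are stably diffeomorphic, for which I would prove the sharper statement $Z\:\natural^n S^2\times D^2\cong Y$; note this is consistent on homeomorphism types, both sides being $\natural^{n+1}S^2\times D^2$. Both manifolds have boundary $\#^{n+1}S^1\times S^2$, so I would compare them by capping each with $\natural^{n+1}S^1\times D^3$ and invoking Lemma~\ref{lemma:bouquet}. Capping $Y$ returns $S$ by construction. For the other side I would use that capping a boundary connected sum yields the connected sum of the capped pieces, so that capping $Z\:\natural^n S^2\times D^2$ produces $\widehat Z\#(\#^n\widehat{S^2\times D^2})$; since $\widehat Z\cong S$ (we glue back the very $S^1\times D^3$ that was removed) and $\widehat{S^2\times D^2}\cong S^4$ (for instance from $S^4=\partial(D^2\times D^3)=(S^2\times D^2)\cup(S^1\times D^3)$), this connected sum is just $S$. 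Both cappings are thus simply connected and diffeomorphic to $S$, so Lemma~\ref{lemma:bouquet} yields $Z\:\natural^n S^2\times D^2\cong Y$, which is exactly the claim.

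The step I expect to require the most care is the identity $\widehat{A\natural B}\cong\widehat A\#\widehat B$ for the capping operation, together with the bookkeeping ensuring that the specific capping used on $Z\:\natural^n S^2\times D^2$ genuinely reproduces $S$, so that the hypotheses of Lemma~\ref{lemma:bouquet}, namely a single closed simply connected manifold containing both pieces as complements of neighbourhoods of bouquets of circles, are really met. Everything else is a direct application of the tools already at hand, in particular Theorem~\ref{thm:correspondence} and Lemma~\ref{lemma:bouquet}.
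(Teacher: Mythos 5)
Your proof is correct, and it constructs the same candidate exotic $S^2\times D^2$ as the paper does: cap the given exotic $\natural^{n+1}S^2\times D^2$ (your $Y$, the paper's $X$) with $\natural^{n+1}S^1\times D^3$ to form a homotopy $4$-sphere $S$, then delete an $S^1\times D^3$ (your $Z$ is the paper's $X_0$). Where you genuinely differ is in certifying the stabilization $Y\cong Z\:\natural^n S^2\times D^2$. The paper introduces the possibly exotic $4$-disk $X'$ associated to $S$ via Proposition~\ref{prop:bs} and applies Proposition~\ref{prop:fake} twice, obtaining $Y\cong X'\:\natural^{n+1}S^2\times D^2$ and $Z\cong X'\:\natural\: S^2\times D^2$, from which the claim follows by regrouping summands. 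You bypass $X'$ and Proposition~\ref{prop:fake} altogether: you cap both $Y$ and $Z\:\natural^n S^2\times D^2$ with $\natural^{n+1}S^1\times D^3$, identify both results with $S$ using the identity $\widehat{A\:\natural\: B}\cong \widehat{A}\:\#\:\widehat{B}$ together with $\widehat{S^2\times D^2}\cong S^4$, and conclude with a single application of Lemma~\ref{lemma:bouquet}. This is a real simplification in bookkeeping, but it rests on the same foundations: the capping identity you rightly flag as the delicate step is exactly what the paper's proof of Proposition~\ref{prop:fake} uses implicitly (there, $X\:\natural^k S^2\times D^2$ capped with $\natural^k S^1\times D^3$ is asserted to be $\widehat X$ without further comment), and well-definedness of every capping rests on Laudenbach--Po\'enaru, as noted in Section~\ref{sec:frames}. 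In effect your argument is the paper's proof with Proposition~\ref{prop:fake} unwound into its Lemma~\ref{lemma:bouquet} core; what it buys is avoiding the auxiliary $4$-disk, at the cost of invoking the boundary-sum versus connected-sum compatibility in slightly greater generality. One small misattribution: the fact that $S$ is a homotopy $4$-sphere does not follow from Lemma~\ref{lemma:bouquet}, which only gives uniqueness of complements; it follows from simple connectivity, $H_2(S;\Z)\cong\{0\}$, and Freedman's theorem, as in the proof of Theorem~\ref{thm:GSCPC}. Lemma~\ref{lemma:bouquet} is what you correctly use for the converse implication, namely that $S\cong S^4$ would force $Y$ to be standard.
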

\begin{proof}
Let $X$ denote our exotic $\natural^{n+1}S^2\times D^2$. We construct
a manifold $X_0$ by attaching $\natural^{n+1}S^1\times D^3$ to $X$,
obtaining a homotopy 4-sphere $\widehat X$, and then removing
$S^1\times D^3$. By Lemma \ref{lemma:bouquet} there is a unique way to
do this. Moreover, we call $X'$ the possibly exotic 4-disk determined
by $\widehat X$ (as in Proposition \ref{prop:bs}).  Applying
Proposition \ref{prop:fake} twice, we have that
\[
X\cong X'\:\natural^{n+1}S^2\times D^2\hspace{2cm}
\text{ and }\hspace{2cm}X_0\cong X'\:\natural S^2\times D^2\:.
\]
  Hence, we conclude that $X\cong X_0\:\natural^nS^2\times D^2$ and
  $X_0$ is an exotic $S^2\times D^2$ because (by our hypothesis) $X$ was
  exotic.
\end{proof}
We can now move to the proof of Theorem~\ref{thm:number}.
\begin{proof}[Proof of Theorem \ref{thm:number}]
 Theorems \ref{thm:GSCPC} and \ref{thm:correspondence} provide the
 identification between stable diffeomorphism types of exotic boundary
 sums of $S^2\times D^2$ and diffeomorphism types of geometrically
 simply connected exotic 4-spheres.  The fact that this correspondence
 is well-defined, follows from Kirby calculus; moreover, we see that
 injectivity is a consequence of Lemma \ref{lemma:bouquet} while
 surjectivity follows from Theorem \ref{thm:GSCPC}.  To conclude, the last
 statement follows directly from Theorem \ref{thm:correspondence} and
 Lemma \ref{lemma:stably}.
\end{proof}
We obtain the following corollary.
\begin{cor}
 Suppose that $X$ is a possibly exotic $S^2\times D^2$. Then $X$ is
 geometrically simply connected if and only if it is stably
 diffeomorphic to the $0$-trace of a link in $S^3$.
\end{cor}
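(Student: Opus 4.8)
The plan is to transport the statement to the associated homotopy $4$-sphere and invoke the characterisation of geometrically simply connected homotopy spheres from Theorem~\ref{thm:GSCPC}. To a possibly exotic $S^2\times D^2$ one associates, as in Theorem~\ref{thm:correspondence}, the homotopy $4$-sphere $\widehat X=X\cup_{S^1\times S^2}(S^1\times D^3)$, which by Lemma~\ref{lemma:bouquet} is well defined. I would reduce the corollary to the chain of equivalences: $X$ is geometrically simply connected $\iff$ $\widehat X$ is geometrically simply connected $\iff$ $X$ is stably diffeomorphic to a $0$-trace; and establish these in a cycle.

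For the equivalence between geometric simple connectivity of $X$ and of $\widehat X$, the forward direction is immediate: gluing $S^1\times D^3$ along $S^1\times S^2$ amounts, after turning the handles of $S^1\times D^3$ upside down, to attaching one $3$-handle and one $4$-handle, so a $1$-handle-free decomposition of $X$ extends to one of $\widehat X$. For the converse I would remove from $\widehat X$ the (essentially unique, by the Cerf--Palais lemma) $4$-handle to obtain the possibly exotic $4$-disk $X'$ with $\widehat{X'}=\widehat X$; if $\widehat X$ is geometrically simply connected then so is $X'$, since deleting the top handle of a $1$-handle-free decomposition leaves a $1$-handle-free one. Finally Proposition~\ref{prop:fake}, applied to $X'=X\cup(\text{one }3\text{-handle})$, yields $X\cong X'\,\natural\,S^2\times D^2$, whence $X$ inherits a $1$-handle-free decomposition from $X'$.

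The link with $0$-traces then runs through Theorem~\ref{thm:GSCPC}. If $\widehat X$ is geometrically simply connected, that theorem presents it as $\widehat X=X(L)\cup_Y\natural^{m}S^1\times D^3$ with $Y=S^3_{(0,\dots,0)}(L)\cong\#^{m}S^1\times S^2$; here $X'=X(L)\cup\{m\ 3\text{-handles}\}$, so Proposition~\ref{prop:fake} gives $X(L)\cong X'\,\natural^{m}S^2\times D^2\cong X\,\natural^{m-1}S^2\times D^2$, i.e.\ $X$ is stably diffeomorphic to the $0$-trace $X(L)$. Conversely, if $X(L)\cong X\,\natural^{k}S^2\times D^2$ for some link $L$ with $S^3_{(0,\dots,0)}(L)\cong\#^{k+1}S^1\times S^2$, then Theorem~\ref{thm:GSCPC} makes $\widehat{X(L)}=X(L)\cup\natural^{k+1}S^1\times D^3$ a geometrically simply connected homotopy sphere, and I would identify $\widehat{X(L)}\cong\widehat X$ using the boundary-sum-to-connected-sum formula $\widehat{A\,\natural\,B}\cong\widehat A\,\#\,\widehat B$ together with $\widehat{S^2\times D^2}=S^2\times D^2\cup_{S^1\times S^2}(S^1\times D^3)\cong S^4$, so that the extra summands cap off trivially.

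The main obstacle is precisely this last cancellation. One is tempted to argue the converse directions by simply deleting $S^2\times D^2$ summands --- passing from ``$X\,\natural^{k}S^2\times D^2$ is geometrically simply connected'' to ``$X$ is'' --- but such a cancellation is exactly the kind of statement that fails in the presence of exotica, so it cannot be used. The honest route is to keep all cancellations at the level of the \emph{closed} manifold $\widehat X$, where the $S^2\times D^2$ and $S^1\times D^3$ pieces pair into copies of $S^4$; the well-definedness of the whole construction, and hence the validity of $\widehat{X(L)}\cong\widehat X$, rests on Lemma~\ref{lemma:bouquet}, mirroring the bookkeeping already carried out in the proof of Lemma~\ref{lemma:stably}.
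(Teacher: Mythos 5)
Your proof is correct and takes essentially the same route as the paper's: transport the question to the homotopy $4$-sphere $\widehat X$, use Theorem~\ref{thm:GSCPC} to tie geometric simple connectivity to $0$-traces, and use Proposition~\ref{prop:fake} (the mechanism of Lemma~\ref{lemma:stably}) to transfer a $1$-handle-free decomposition back to $X$. The only difference is presentational: the paper simply cites Theorem~\ref{thm:number}, which packages exactly the correspondence you re-derive by hand, including the capping-off bookkeeping $S^2\times D^2\cup_{S^1\times S^2}S^1\times D^3\cong S^4$.
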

\begin{proof}
  We apply Theorem \ref{thm:number}. If $X$ is geometrically simply
  connected, then the same is true for the homotopy 4-sphere $S$
  obtained by gluing $S^1\times D^3$ on $\partial X$. Conversely, if $S$
  is geometrically simply connected then there is an $i$ such that
  $X\:\natural^iS^2\times D^2$ is diffeomorphic to the 0-trace of a
  link. We saw in the proof of Lemma \ref{lemma:stably} that then $X$
  has a handle decomposition without 1-handles.
\end{proof}
Note that no exotic $S^2\times D^2$ can be the 0-trace of a knot, but some stabilize to the 0-trace of a link in the case a geometrically simply connected exotic 4-disk exists. 

\section{Embeddings of simply connected 4-manifolds}
\label{sec:frames}
If a smooth 4-manifold $X$ does not admit a handle decomposition
without 1-handles, then it does not seem sufficient to consider the
set $\mathcal S_X$ of links smoothly slice in $X$ in order to
characterize $X$.  We proceed in a more general way: first, given a closed, simply connected, smooth 4-manifold $M$, by Lemma
\ref{lemma:bouquet} we have that the submanifold
$M_k=M\setminus\mathring{(\natural^kS^1\times D^3)}$ is well-defined
for every $k\geq0$.  Therefore, we can define the set
$\widehat{\mathcal S}_{M_k}$ of \emph{smoothly slice framed links} in
$M_k$ for every $k$. The elements of this set are $N$-component links
$\vec{L}\hookrightarrow\#^kS^1\times S^2$, equipped with a framing
$t_i$ for each component, such that the corresponding slice disk $D_i$
in $M_k$ has tubular neighborhood whose relative Euler number agrees
with $t_i$ for $i=1,...,N$. Note that $M_k\cong
M_0\:\natural^kS^2\times D^2$ for every $k\geq0$.

We then consider $\widehat{\mathcal S}_M=\displaystyle\bigcup_{k\geq0}\widehat{\mathcal S}_{M_k}$ for every closed, simply connected, smooth 4-manifold $M$. 
\begin{remark}
 Note that the set
$\mathcal S_X$, introduced earlier, coincides with
$\widehat{\mathcal S}_{M_0}=\widehat{\mathcal S}_X$ where $X$
is a possibly exotic $D^4$ and $M$ the homotopy 4-sphere
obtained by gluing a 4-handle to $X$. 
\end{remark}
We use a similar construction to extend the notion of the \emph{trace
  of a framed link} $\vec L$ in $\#^kS^1\times S^2$: in fact, since we
can view $\vec L$ as embedded in $\partial(\natural^k S^1\times D^3)$,
we define $X(\vec L)$ as the 4-manifold obtained by attaching
2-handles along $\vec L$, with the given framing.  The notion of the
trace of a framed link is well-defined, as diffeomorphic framed links
in $\#^kS^1\times S^2$ possess diffeomorphic traces. This follows
from a result of Laudenbach and Po\'enaru in \cite{LP} 
telling us that every self-diffeomorphism of $\#^kS^1\times S^2$ extends
to  $\natural^kS^1\times D^3$.

We recall that the mirror image $\vec L^*$ of a framed link $\vec L$
is the mirror image of $L$, equipped with
the framings of $\vec L$, after reversed their
signs.  We then have the following version of the trace embedding
lemma.
\begin{lemma}[Trace embedding lemma for framed links]
 Let us assume that $M_k$ is obtained from a $4$-manifold $M$ as explained before.  Then a framed link $\vec
 L\hookrightarrow\#^kS^1\times S^2$ is smoothly slice (as a framed
 link) in $M_k$ if and only if $X(\vec L^*)\hookrightarrow M$.
\end{lemma}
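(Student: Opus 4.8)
The plan is to replicate the proof of the trace embedding lemma for links (Lemma~\ref{lemma:trace}) almost verbatim, with the handlebody $\natural^k S^1\times D^3$ now playing the role that $D^4$ played there, and with the closed manifold $M$ in the role of the homotopy $4$-sphere $\widehat X$. Concretely, I would fix the decomposition $M=(\natural^k S^1\times D^3)\cup_{\#^k S^1\times S^2}M_k$ coming from the definition of $M_k$, and record at the outset that gluing along $\#^k S^1\times S^2$ reverses the orientation of the boundary $3$-manifold. This orientation reversal is exactly the source of the mirror $\vec L^*$ and of the sign change of the framings: a framing measured as a relative Euler number from the $M_k$-side appears as its negative when viewed from the $\natural^k S^1\times D^3$-side, which is precisely the framing convention built into $\vec L^*$.

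For the forward direction, suppose $\vec L$ is smoothly slice in $M_k$, so that each component $L_i$ bounds a properly embedded disk $D_i\subset M_k$, the $D_i$ are pairwise disjoint, and the relative Euler number of $\nu(D_i)$ equals $t_i$. Taking disjoint tubular neighborhoods $\nu(D_i)$, each $\nu(D_i)$ is a $2$-handle whose attaching circle is the $i$-th component of $\vec L^*$ sitting in $\partial(\natural^k S^1\times D^3)=\#^k S^1\times S^2$ and whose framing is $-t_i$. Hence $(\natural^k S^1\times D^3)\cup\nu(D_1)\cup\cdots\cup\nu(D_N)$ is diffeomorphic to the trace $X(\vec L^*)$, and since $\natural^k S^1\times D^3\subset M$ and $\nu(D_i)\subset M_k\subset M$, this manifold is a submanifold of $(\natural^k S^1\times D^3)\cup M_k=M$, giving $X(\vec L^*)\hookrightarrow M$.

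For the converse, assume $X(\vec L^*)\hookrightarrow M$ and write $M=X(\vec L^*)\cup_f W$ with $W=M\setminus\mathring{X(\vec L^*)}$. Using the handle decomposition $X(\vec L^*)=(\natural^k S^1\times D^3)\cup\{2\text{-handles along }\vec L^*\}$, I would isotope the embedded copy of $\natural^k S^1\times D^3$ so that, by the well-definedness of $M_k$ guaranteed by Lemma~\ref{lemma:bouquet}, removing its interior from $M$ recovers $M_k$. The cores of the $2$-handles are disjoint disks bounding $\vec L^*$ on $\#^k S^1\times S^2$; being disjoint from the interior of the removed $\natural^k S^1\times D^3$, they persist as disjoint properly embedded disks in $M_k$. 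Reversing the orientation of the boundary turns $\vec L^*$ back into $\vec L$ and the framings $-t_i$ back into $t_i$, so $\vec L$ is smoothly slice, as a framed link, in $M_k$.

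The main obstacle I expect is bookkeeping rather than conceptual: one must verify that the relative Euler number of the normal bundle of $D_i$ translates correctly into the $2$-handle framing under the orientation reversal, so that the handles really assemble into $X(\vec L^*)$ and not some other trace; and one must justify that the copy of $\natural^k S^1\times D^3$ arising as the $0$-handle part of $X(\vec L^*)$ may be taken to be the standard one whose complement is $M_k$. This last point is where Lemma~\ref{lemma:bouquet} is essential, replacing the appeal to the Cerf--Palais uniqueness of embedded $D^4$'s used in the unframed case.
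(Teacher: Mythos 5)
Your proposal is correct and follows essentially the same route as the paper's proof: both directions replicate Lemma~\ref{lemma:trace} with $\natural^k S^1\times D^3$ in place of $D^4$ and $M$ in place of $\widehat X$, the tubular neighborhoods of the slice disks serving as $2$-handles attached with sign-reversed framings, and the converse obtained by turning the embedded trace upside down. Your explicit appeal to Lemma~\ref{lemma:bouquet} for identifying the complement of the $1$-handlebody with $M_k$ is precisely what the paper's terser phrase ``considering the 4-manifold upside down'' relies on, via the well-definedness of $M_k$ established just before the lemma.
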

\begin{proof}
 The proof proceeds in the same way as the one of Lemma
 \ref{lemma:trace}. Suppose that $\vec L$ is smoothly slice in $M_k$
 for some $k\geq0$, with slice disks $D_1,...,D_n$. Take a handle
 decomposition of $M$ such that $M=\natural^kS^1\times
 D^3\cup_{\#^kS^1\times S^2}M_k$. Thus we can view $L^*$ as a link in
 $\partial(\natural^kS^1\times D^3)=\#^kS^1\times S^2$. The manifold
 $\natural^kS^1\times D^3\cup\nu(D_1)\cup...\cup\nu(D_n)$ is
 diffeomorphic to $X(\vec L^*)$, since each $\nu(D_i)$ can be seen as
 a 2-handle attached to $\natural^kS^1\times D^3$ with framing
 reversed with respect to the one of $\vec L$.
 
 We now assume that $X(\vec L^*)\hookrightarrow M$. Considering the
 4-manifold upside down we obtain that $L$ bounds a collection of
 mutually disjoint embedded disks in $M_k$; moreover, the tubular
 neighborhoods of these disks have relative Euler numbers which
 coincide with the framings of $\vec L$, because they equal the attaching
 framings of the 2-handles in $X(\vec L^*)$ with reversed signs.
\end{proof}
We can then prove the following generalization of Theorem \ref{thm:slice_links}.
\begin{thm}
 \label{thm:slice_links_k}
 Let us consider two closed, simply connected, smooth $4$-manifolds
 $M$ and $N$. Then $N=M\#M'$ if and only if $\widehat{\mathcal
   S}_{M}\subset\widehat{\mathcal S}_{N}$. In particular, one has
 $\widehat{\mathcal S}_S=\widehat{\mathcal S}_{S^4}$ if and only if
 $S$ is a small homotopy $4$-sphere.
\end{thm}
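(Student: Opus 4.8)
The plan is to reduce both implications to the trace embedding lemma for framed links, exactly as Theorem~\ref{thm:slice_links} was deduced from its unframed analogue. Throughout I fix a handle decomposition of $M$ with one $0$-handle, $g$ $1$-handles, $2$-handles attached along a framed link $\vec L\hookrightarrow\#^gS^1\times S^2$, $h$ $3$-handles and one $4$-handle, and I set $X(\vec L)=M\setminus(\text{$3$- and $4$-handles})$, a codimension-zero submanifold of $M$ with $\partial X(\vec L)=\#^hS^1\times S^2=:Y$. For the direction $N=M\#M'\Rightarrow\widehat{\mathcal S}_M\subset\widehat{\mathcal S}_N$ I would argue as in Proposition~\ref{prop:inclusion}: given $\vec J\in\widehat{\mathcal S}_{M_k}$, the trace embedding lemma for framed links yields $X(\vec J^*)\hookrightarrow M$; since the connected sum may be performed inside an arbitrarily small ball placed in the complement $M\setminus\mathring{X(\vec J^*)}$ (nonempty because $X(\vec J^*)$ has boundary), this embedding survives into $N=M\#M'$, and applying the lemma again in $N$ gives $\vec J\in\widehat{\mathcal S}_{N_k}$, whence $\widehat{\mathcal S}_M\subset\widehat{\mathcal S}_N$.

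The content is in the converse. Assuming $\widehat{\mathcal S}_M\subset\widehat{\mathcal S}_N$, the inclusion $X(\vec L)\hookrightarrow M$ shows via the trace embedding lemma that $\vec L^*\in\widehat{\mathcal S}_{M_g}\subset\widehat{\mathcal S}_M$, so by hypothesis $\vec L^*\in\widehat{\mathcal S}_{N_g}$ and hence $X(\vec L)\hookrightarrow N$, giving a decomposition $N=X(\vec L)\cup_Y Z$ with $Z=N\setminus\mathring{X(\vec L)}$. Setting $W=X(\vec L)\cup(\text{$h$ $3$-handles})$, a manifold with $\partial W=S^3$ and $M=\widehat W$, Proposition~\ref{prop:fake} gives $X(\vec L)\cong W\:\natural^hS^2\times D^2$, so $W$ is a boundary-connected summand of $X(\vec L)$ and therefore of $N$. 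I would then upgrade this to a genuine connected sum: the splitting disk $D^3\subset X(\vec L)$ separating $W$ from $\natural^hS^2\times D^2$ meets $Y$ only in $\partial D^3$, and pushing the complementary disk $\partial W\cap Y$ into a collar $Y\times[0,1]\subset Z$ produces an embedded $S^3\subset N$ bounding a copy of $W$ on one side. Capping the other side $R$ with a $4$-handle gives a closed, simply connected $M'=\widehat R$ with $N=W\cup_{S^3}R=(M\setminus\mathring{D^4})\cup_{S^3}(M'\setminus\mathring{D^4})=M\#M'$.

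The final assertion then follows by applying the equivalence to the pairs $(M,N)=(S^4,S)$ and $(S,S^4)$. The inclusion $\widehat{\mathcal S}_{S^4}\subset\widehat{\mathcal S}_S$ holds unconditionally because $S=S^4\#S$, while $\widehat{\mathcal S}_S\subset\widehat{\mathcal S}_{S^4}$ holds precisely when $S^4=S\#M'$ for some $M'$. This last condition is equivalent to $S$ being small: if $S^4=S\#M'$ then $X:=S\setminus\mathring{D^4}$ embeds in $S^4$, hence in $D^4$ after deleting a point of its complement and invoking compactness; conversely, if $X\hookrightarrow D^4$ then, as recalled after Proposition~\ref{prop:bs}, the complementary homotopy sphere $M'$ satisfies $S\#M'=S^4$.

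I expect the main obstacle to be the concluding step of the converse: turning the boundary-connected-sum splitting $X(\vec L)\cong W\:\natural^hS^2\times D^2$ supplied by the fake cancelling pair into a separating $3$-sphere in the closed manifold $N$, and verifying that the collar push-off bounds a clean copy of $M\setminus\mathring{D^4}$. The accompanying bookkeeping must also be kept straight, namely that the attaching link lives in $\#^gS^1\times S^2$ while $Y$ has genus $h$, and that $M'$ is simply connected because $\pi_1(N)=\pi_1(M)*\pi_1(M')$.
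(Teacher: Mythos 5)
Your proof is correct and follows essentially the same route as the paper: both directions rest on the trace embedding lemma for framed links, with the converse obtained by applying the hypothesis to the framed link $\vec L$ presenting the $2$-handles of $M$ and then invoking Proposition~\ref{prop:fake} to exhibit $M\setminus\mathring{D^4}$ as a boundary-connected summand of $X(\vec L)\hookrightarrow N$. The only difference is that you spell out what the paper compresses into ``we conclude as in the proof of Theorem~\ref{thm:slice_links}'', namely the separating $S^3$, the capping of the complement to produce $M'$, and the deduction of the final assertion about small homotopy $4$-spheres.
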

\begin{proof}
 If $\vec L\in\widehat{\mathcal S}_{M}$ then by the trace embedding
 lemma we have $X(\vec L^*)\hookrightarrow
 M\setminus\mathring{D^4}\hookrightarrow N$, implying  $\vec
 L\in\widehat{\mathcal S}_{N}$.
 
 Assume now $\widehat{\mathcal S}_{M}\subset\widehat{\mathcal
   S}_{N}$. If we call $\vec L$ the framed link which presents the
 2-handles in a Kirby diagram of $M$ then $\vec
 L\hookrightarrow\natural^kS^1\times D^3$ and $M=X(\vec
 L)\cup\natural^tS^1\times D^3$; thus $\vec L^*\in\widehat{\mathcal
   S}_{M}\subset\widehat{\mathcal S}_{N}$. Since the trace embedding
 lemma implies $X(\vec L)\hookrightarrow N$, we conclude as in the
 proof of Theorem \ref{thm:slice_links}.
 $\widehat{\mathcal S}_S=\widehat{\mathcal S}_{S^4}$ then implies that
 $S_0\subset S^4$, therefore $S$ is a small homotopy 4-sphere.
\end{proof}
A corollary of this result is that the smooth 4-dimensional Sch\"onflies theorem is equivalent to claim that $S^4$ is determined by its set of framed smoothly slice links. In addition, we can rewrite the smooth 4-dimensional Poincar\'e conjecture as follows.
\begin{thm}
 There are no exotic $4$-spheres if and only if for every
 $n$-component framed link $\vec L\hookrightarrow\#^kS^1\times S^2$
 with $n\geq2k$, such that $S^3(\vec L)\cong\#^{n-k}S^1\times S^2$ and
 $X(\vec L)$ is simply connected, one has $X(\vec
 L)\cong\natural^{n-k}S^2\times D^2$.
 
 Furthermore, there exists a large exotic $D^4$ if and only if we can find an $\vec L$ as before which is not smoothly slice in $\natural^kS^2\times D^2$.
\end{thm}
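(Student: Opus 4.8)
The plan is to run, for both equivalences, the construction that turns a framed link into a homotopy $4$-sphere, exactly as in Theorem~\ref{thm:GSCPC} but now allowing $1$-handles. Given $\vec L$ as in the statement, set $S=X(\vec L)\cup_{\#^{n-k}S^1\times S^2}\natural^{n-k}S^1\times D^3$; this is closed and well-defined since $S^3(\vec L)\cong\#^{n-k}S^1\times S^2$. First I would check that $S$ is a homotopy $4$-sphere: reading $\natural^{n-k}S^1\times D^3$ as $(n-k)$ $3$-handles and a $4$-handle, a handle count gives $\chi(S)=(1-k+n)-(n-k)+1=2$, while $3$- and $4$-handles do not change $\pi_1$, so $\pi_1(S)=\pi_1(X(\vec L))=1$; Freedman's theorem then identifies $S$ as a homotopy $4$-sphere.

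For the first equivalence the forward direction is immediate: if there are no exotic $4$-spheres then $S\cong S^4$, and since $X(\vec L)=S\setminus\mathring{(\natural^{n-k}S^1\times D^3)}$ is well-defined by Lemma~\ref{lemma:bouquet}, I would conclude $X(\vec L)\cong(S^4)_{n-k}\cong\natural^{n-k}S^2\times D^2$ using the splitting $M_k\cong M_0\,\natural^kS^2\times D^2$ with $M=S^4$, $M_0=D^4$. For the converse I argue by contraposition: given an exotic $4$-sphere, take any handle decomposition with a single $0$- and $4$-handle and add cancelling $(2,3)$-pairs until the number $n$ of $2$-handles satisfies $n\geq2k$ (each such pair raises $n$ and the number of $3$-handles by one, fixes $k$, keeps the trace simply connected, and replaces $S^3(\vec L)$ by a connected sum with one more $S^1\times S^2$). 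The $0$-, $1$- and $2$-handles then present a framed link $\vec L$ as in the statement with $S=X(\vec L)\cup\natural^{n-k}S^1\times D^3$; were $X(\vec L)\cong\natural^{n-k}S^2\times D^2$, extending the boundary identification over $\natural^{n-k}S^1\times D^3$ by Laudenbach--Po\'enaru would give $S\cong S^4$, contradicting exoticness. This is the only place where $n\geq2k$ enters, and it is arranged for free.

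For the second equivalence I would reduce, via Proposition~\ref{prop:bs}, to detecting when the homotopy sphere $S$ is \emph{large}, and prove the key equivalence: for $\vec L$ presenting $S$,
\[
S\ \text{is large}\quad\Longleftrightarrow\quad X(\vec L)\not\hookrightarrow S^4 ,
\]
which by the trace embedding lemma for framed links is the same as saying that $\vec L^*$ is not smoothly slice in $(S^4)_k\cong\natural^kS^2\times D^2$. One direction is easy: if $S$ is small then $S_0:=S\setminus\mathring{D^4}\hookrightarrow D^4\hookrightarrow S^4$, and since the removed $D^4$ can be taken inside the $\natural^{n-k}S^1\times D^3$ summand we have $X(\vec L)\hookrightarrow S_0\hookrightarrow S^4$. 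The substantive direction assumes $X(\vec L)\hookrightarrow S^4$ and must produce an embedding $S_0\hookrightarrow S^4$: here I would apply the Fake $(2,3)$-cancelling pair Proposition~\ref{prop:fake} to the disk $S_0=X(\vec L)\cup\{(n-k)\ 3\text{-handles}\}$, obtaining $X(\vec L)\cong S_0\,\natural^{n-k}S^2\times D^2$; then $S_0\hookrightarrow X(\vec L)\hookrightarrow S^4$, and a compact subset of $S^4$ misses a point, hence lies in a ball, giving $S_0\hookrightarrow D^4$ and smallness of $S$.

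With this equivalence in hand the second statement follows. If a large exotic $D^4$ exists then so does a large homotopy sphere $S$; presenting it by $\vec K$ (with $n\geq2k$ arranged as above) and applying the equivalence, $X(\vec K)\not\hookrightarrow S^4$, so $\vec L:=\vec K^*$ is a link as in the statement that is not slice in $\natural^kS^2\times D^2$. Conversely, such a non-slice $\vec L$ gives $X(\vec L^*)\not\hookrightarrow S^4$, and then the homotopy sphere $S=X(\vec L^*)\cup\natural^{n-k}S^1\times D^3$ is large, hence exotic, yielding a large exotic $D^4$. Throughout I would record the routine fact that mirroring preserves the hypotheses ``as before'' (the component count and $n\geq2k$ are unchanged, $S^3(\vec L^*)\cong\#^{n-k}S^1\times S^2$ since orientation reversal fixes $\#^{n-k}S^1\times S^2$, and $X(\vec L^*)$ stays simply connected because mirroring acts on $\pi_1(\#^kS^1\times S^2)$ by an automorphism and so preserves normal generation). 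The main obstacle is precisely the substantive direction of the displayed equivalence: upgrading the trace embedding $X(\vec L)\hookrightarrow S^4$ to an honest embedding of the contractible piece $S_0$, for which Proposition~\ref{prop:fake} is the essential tool.
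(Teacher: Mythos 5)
Your proof is correct and takes essentially the same route as the paper: cap the trace with $\natural^{n-k}S^1\times D^3$ to get a homotopy sphere, use Lemma~\ref{lemma:bouquet} and Laudenbach--Po\'enaru for the first equivalence, and use Proposition~\ref{prop:fake} together with the trace embedding lemma for framed links for the second (the paper simply cites Theorem~\ref{thm:slice_links_k} ``and its proof'', whose content is exactly your Proposition~\ref{prop:fake} argument upgrading $X(\vec L)\hookrightarrow S^4$ to $S_0\hookrightarrow D^4$). The only real divergence is how $n\geq 2k$ is arranged: the paper turns the Morse function upside down, swapping $1$- and $3$-handles, while you stabilize with cancelling $(2,3)$-pairs; both are valid.
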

\begin{proof}
 Every homotopy 4-sphere has a handle decomposition with $k$
 1-handles, $n$ 2-handles and $n-k$ 3-handles. By possibly multiplying
 the corresponding Morse function with $(-1)$, we can assume that
 $n-k\geq k$.
 
 The fact that all exotic spheres are diffeomorphic to $S^4$ is equivalent to
 claim that the 4-manifold given by
 $D^4\cup\{\text{1-handles}\}\cup\{\text{2-handles}\}$ is always
 diffeomorphic to $\natural^{n-k}S^2\times D^2$, from Lemma
 \ref{lemma:bouquet}. It is then immediate to see that the trace of
 every framed link $\vec L\hookrightarrow\#^kS^1\times S^2$,
 satisfying the hypothesis, always gives rise to such a manifold.
 
 The second statement is then a consequence of the first one, Theorem
 \ref{thm:slice_links_k} and its proof.
\end{proof}
If $M$ has a Kirby presentation with exactly $k$ 1-handles, we do not
actually need to check the entire $\widehat{\mathcal S}_M$.
\begin{prop}
 \label{prop:min_one_handles}
 Let us assume that $M$ is a $4$-manifold as before and $k$ is the
 minimum number of $1$-handles in a Kirby diagram for $M$. Then we
 have that $\widehat{\mathcal S}_M$ is determined by
 $\widehat{\mathcal S}_{M_k}$, in the sense that if there is another manifold $N$ with $\widehat{\mathcal S}_{M_k}\subset\widehat{\mathcal S}_{N_k}$ then $\widehat{\mathcal S}_{M}\subset\widehat{\mathcal S}_{N}$.
\end{prop}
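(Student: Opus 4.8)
The plan is to reduce everything to the machinery already developed for Theorem~\ref{thm:slice_links_k}, after making the key observation that the proof of the inclusion $\widehat{\mathcal S}_M\subset\widehat{\mathcal S}_N$ given there is driven by a \emph{single} framed link: the one presenting the $2$-handles of $M$. Once this is isolated, the present statement becomes the remark that this distinguished link already sits at the minimal level $M_k$.

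First I would fix a Kirby diagram of $M$ realizing the minimum number $k$ of $1$-handles, and let $\vec L$ denote the framed link in $\#^kS^1\times S^2=\partial(\natural^kS^1\times D^3)$ presenting its $2$-handles. The sub-handlebody consisting of the $0$-, $1$- and $2$-handles of $M$ is exactly $X(\vec L)$, so there is an obvious embedding $X(\vec L)\hookrightarrow M$. Applying the trace embedding lemma for framed links, with the index $k$ matching the number of $1$-handles, then gives $\vec L^*\in\widehat{\mathcal S}_{M_k}$; here the clause about relative Euler numbers is what guarantees that the framings produced are the correct ones. This is precisely the step where minimality of $k$ enters: it is what allows $\vec L$ to be realized inside $\#^kS^1\times S^2$, landing $\vec L^*$ at the single level $M_k$ that appears in the hypothesis. (Any $k'\ge k$ would serve equally well, since one can always introduce cancelling $1$--$2$ pairs; the minimal $k$ is simply the smallest such level.)

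Next I would feed in the hypothesis: since $\vec L^*\in\widehat{\mathcal S}_{M_k}\subset\widehat{\mathcal S}_{N_k}\subset\widehat{\mathcal S}_N$, the framed link $\vec L^*$ is smoothly slice in $N_k$ (note $N_k$ is well-defined by Lemma~\ref{lemma:bouquet}), so the trace embedding lemma yields $X(\vec L)\hookrightarrow N$. At this point the situation is identical to the one reached midway through the proof of Theorem~\ref{thm:slice_links_k}: we have the embedding $X(\vec L)\hookrightarrow N$ together with the decomposition $M=X(\vec L)\cup\natural^tS^1\times D^3$ coming from the $3$- and $4$-handles. Running the same argument — writing $N=X(\vec L)\cup_\Sigma Z$ with $Z=N\setminus\mathring{X(\vec L)}$, using Proposition~\ref{prop:fake} applied to $M_0=X(\vec L)\cup\{t\ \text{$3$-handles}\}$ to identify $X(\vec L)$ with $M_0\:\natural^tS^2\times D^2$, and gluing back — produces $N\cong M\# M'$, whence $\widehat{\mathcal S}_M\subset\widehat{\mathcal S}_N$ by the already-established direction of Theorem~\ref{thm:slice_links_k}.

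The only genuine content is the opening observation, and the point to state carefully is that the proof of Theorem~\ref{thm:slice_links_k} never uses the full inclusion $\widehat{\mathcal S}_M\subset\widehat{\mathcal S}_N$ but only the membership of the one link $\vec L^*$. I do not expect a serious obstacle beyond verifying that the framings (relative Euler numbers) match under both applications of the trace embedding lemma, which is routine, and confirming that the chosen $\vec L$ indeed lies at level exactly $k$ when the diagram attains the minimal number of $1$-handles.
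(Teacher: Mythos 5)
Your proposal is correct and follows essentially the same route as the paper: both take the framed link $\vec L$ presenting the $2$-handles of a Kirby diagram of $M$ with exactly $k$ $1$-handles, use the trace embedding lemma and the hypothesis $\widehat{\mathcal S}_{M_k}\subset\widehat{\mathcal S}_{N_k}$ to obtain $X(\vec L)\hookrightarrow N$, and then rerun the argument of Theorem~\ref{thm:slice_links_k} (via Proposition~\ref{prop:fake}) to get $M\setminus\mathring{D^4}\hookrightarrow N$, hence $\widehat{\mathcal S}_{M}\subset\widehat{\mathcal S}_{N}$. Your key observation---that only the single distinguished link $\vec L^*$, which lives at level $k$, is ever needed from the hypothesis---is exactly the point the paper's (terser) proof relies on.
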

\begin{proof}
 Take a framed link in $\#^kS^1\times S^2$ that presents the 2-handles in a Kirby diagram for $M$. Applying the same argument in the proof of Theorem \ref{thm:slice_links_k} yields $M_0=M\setminus\mathring D^4\hookrightarrow N$, but then Theorem \ref{thm:slice_links_k} also leads to $\widehat{\mathcal S}_{M}\subset\widehat{\mathcal S}_{N}$.
\end{proof}
When $M$ and $N$ are such that $\widehat{\mathcal S}_{M}=\widehat{\mathcal S}_{N}$, from Theorem \ref{thm:slice_links_k} we have $N\cong M\#S$, where $S$ is a homotopy 4-sphere. Hence, we obtain Theorem \ref{thm:type}.
\begin{proof}[Proof of Theorem \ref{thm:type}]
 If $M\cong N$ then obviously one has $\widehat{\mathcal S}_{M}=\widehat{\mathcal S}_{N}$. Conversely, because of the observation above, when $\widehat{\mathcal S}_{M}=\widehat{\mathcal S}_{N}$ we have $N\cong M\#S$, but the manifold $S$ has to be diffeomorphic to $S^4$, since we are assuming that 4SPC holds.
\end{proof}

\section{High order traces and applications}
\label{sec:higherorder}
Inspired by the higher genus traces defined by Hayden and Piccirillo
in \cite{HP}, we generalize this concept to the link setting in the
way that it can be applied to give an alternative characterization of
the slice genus. We start by describing the two main constructions,
leaving the general case for later: the first 4-manifold, which here
we denote with $X^{g,1}(K)$, is the genus $g$ 2-handle attached
along the knot $K$ in $S^3$ (with framing $0$) appearing in \cite{HP};
while the second one consists of attaching a planar (genus zero)
2-handle with $\ell$ boundary component along an $\ell$-component link
in $S^3$, and we call it $X^{0,\ell}(L)$.

\paragraph*{High genus 2-handles} We recall that $X^{g,1}(K)$ is gotten by gluing $F_g\times D^2$, where $F_g=(\#^gT^2)\setminus\mathring{D^2}$ is the punctured connected sum of $g$ tori, together with an identification $f:\nu(K)\rightarrow\partial F_g\times D^2$ where $K\hookrightarrow S^3=\partial D^4$ is a knot.
\begin{figure}[ht]
 \centering
 \def\svgwidth{10cm}
\begingroup%
  \makeatletter%
  \providecommand\color[2][]{%
    \errmessage{(Inkscape) Color is used for the text in Inkscape, but the package 'color.sty' is not loaded}%
    \renewcommand\color[2][]{}%
  }%
  \providecommand\transparent[1]{%
    \errmessage{(Inkscape) Transparency is used (non-zero) for the text in Inkscape, but the package 'transparent.sty' is not loaded}%
    \renewcommand\transparent[1]{}%
  }%
  \providecommand\rotatebox[2]{#2}%
  \newcommand*\fsize{\dimexpr\f@size pt\relax}%
  \newcommand*\lineheight[1]{\fontsize{\fsize}{#1\fsize}\selectfont}%
  \ifx\svgwidth\undefined%
    \setlength{\unitlength}{1537.51300049bp}%
    \ifx\svgscale\undefined%
      \relax%
    \else%
      \setlength{\unitlength}{\unitlength * \real{\svgscale}}%
    \fi%
  \else%
    \setlength{\unitlength}{\svgwidth}%
  \fi%
  \global\let\svgwidth\undefined%
  \global\let\svgscale\undefined%
  \makeatother%
  \begin{picture}(1,0.22453766)%
    \lineheight{1}%
    \setlength\tabcolsep{0pt}%
    \put(0,0){\includegraphics[width=\unitlength,page=1]{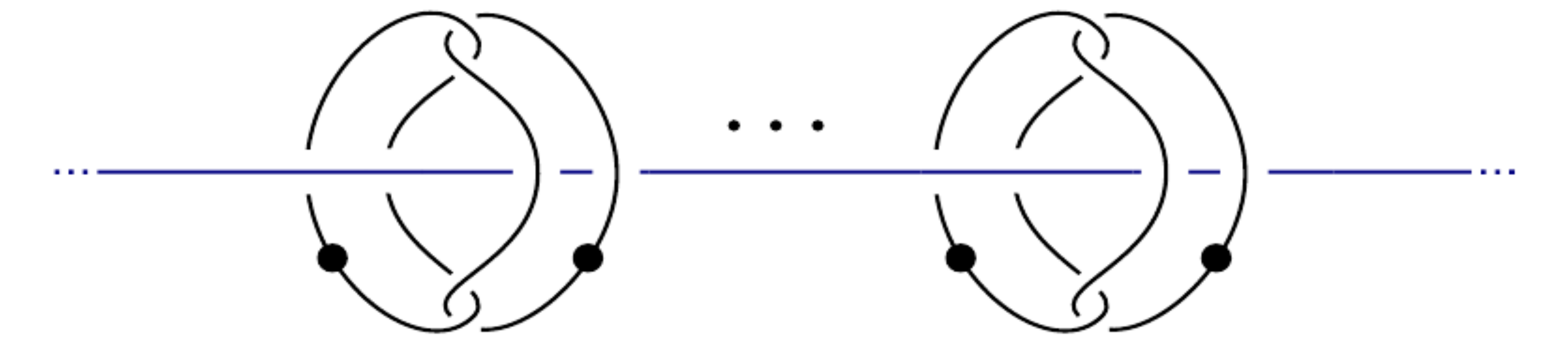}}%
    \put(0.08297871,0.12978724){\color[rgb]{0,0,0.50196078}\makebox(0,0)[lt]{\lineheight{1.25}\smash{\begin{tabular}[t]{l}0\end{tabular}}}}%
    \put(0.89858154,0.07163121){\color[rgb]{0,0,0}\makebox(0,0)[lt]{\lineheight{1.25}\smash{\begin{tabular}[t]{l}$K$\end{tabular}}}}%
    \put(0.43532186,0.19171333){\color[rgb]{0,0,0}\makebox(0,0)[lt]{\lineheight{1.25}\smash{\begin{tabular}[t]{l}$g$ times\end{tabular}}}}%
  \end{picture}%
\endgroup%
   
 \caption{A Kirby diagram of $X^{g,1}(K)$. The picture is taken from \cite{HP}.}
 \label{High}
\end{figure}
Obviously, in order to specify the diffeomorphism $f$ we also need to
fix a framing: we take the latter to be zero. The resulting 4-manifold
has a Kirby diagram as in Figure \ref{High}.

\paragraph*{Planar 2-handles} We now describe the construction of $X^{0,\ell}(L)$ and we show that it is closely related to the concept of knotification of a link $L$. We again start from $D^4$ and we want to glue $G_\ell\times D^2$, where $G_\ell=S^2\setminus\{\ell\text{ disks}\}$ is a 2-sphere with $\ell$ punctures, along $L$, an $\ell$-component link in $\partial
D^4$. The attaching region of the handle $G_\ell\times D^2$ is $\partial G_\ell\times D^2\cong A_1\sqcup...\sqcup A_\ell$ where $A_i\cong S^1\times D^2$ for each $i$; hence, this time we need $\ell$ gluing maps $f_i:\nu(L_i)\rightarrow A_i$ with framings $t_1,...,t_\ell$ such that \[t_1+...+t_\ell=-2\cdot\lk(L)\:,\] where here we take $\lk(L)=\displaystyle\sum_{i<j}\lk(L_i,L_j)$.
\begin{figure}[ht]
 \centering
 \def\svgwidth{7.5cm}
\begingroup%
  \makeatletter%
  \providecommand\color[2][]{%
    \errmessage{(Inkscape) Color is used for the text in Inkscape, but the package 'color.sty' is not loaded}%
    \renewcommand\color[2][]{}%
  }%
  \providecommand\transparent[1]{%
    \errmessage{(Inkscape) Transparency is used (non-zero) for the text in Inkscape, but the package 'transparent.sty' is not loaded}%
    \renewcommand\transparent[1]{}%
  }%
  \providecommand\rotatebox[2]{#2}%
  \newcommand*\fsize{\dimexpr\f@size pt\relax}%
  \newcommand*\lineheight[1]{\fontsize{\fsize}{#1\fsize}\selectfont}%
  \ifx\svgwidth\undefined%
    \setlength{\unitlength}{2432.94625097bp}%
    \ifx\svgscale\undefined%
      \relax%
    \else%
      \setlength{\unitlength}{\unitlength * \real{\svgscale}}%
    \fi%
  \else%
    \setlength{\unitlength}{\svgwidth}%
  \fi%
  \global\let\svgwidth\undefined%
  \global\let\svgscale\undefined%
  \makeatother%
  \begin{picture}(1,0.40966995)%
    \lineheight{1}%
    \setlength\tabcolsep{0pt}%
    \put(0,0){\includegraphics[width=\unitlength,page=1]{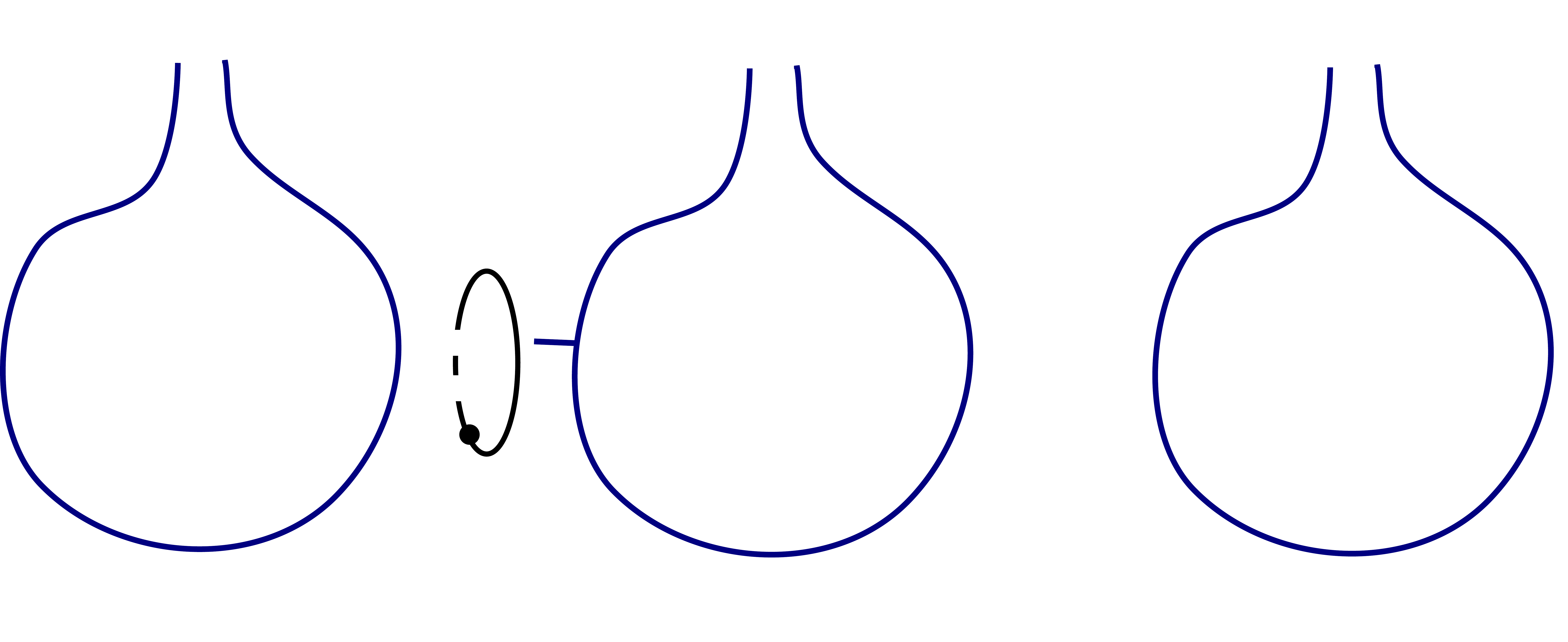}}%
    \put(0.53346153,0.02720591){\color[rgb]{0,0,0.50196078}\makebox(0,0)[lt]{\lineheight{1.25}\smash{\begin{tabular}[t]{l}0\end{tabular}}}}%
    \put(0,0){\includegraphics[width=\unitlength,page=2]{Planar2.pdf}}%
    \put(0.18026192,0.30712225){\color[rgb]{0,0,0}\makebox(0,0)[lt]{\lineheight{1.25}\smash{\begin{tabular}[t]{l}$K_L$\end{tabular}}}}%
  \end{picture}%
\endgroup%

 \hspace{1cm}
 \def\svgwidth{7.5cm}
\begingroup%
  \makeatletter%
  \providecommand\color[2][]{%
    \errmessage{(Inkscape) Color is used for the text in Inkscape, but the package 'color.sty' is not loaded}%
    \renewcommand\color[2][]{}%
  }%
  \providecommand\transparent[1]{%
    \errmessage{(Inkscape) Transparency is used (non-zero) for the text in Inkscape, but the package 'transparent.sty' is not loaded}%
    \renewcommand\transparent[1]{}%
  }%
  \providecommand\rotatebox[2]{#2}%
  \newcommand*\fsize{\dimexpr\f@size pt\relax}%
  \newcommand*\lineheight[1]{\fontsize{\fsize}{#1\fsize}\selectfont}%
  \ifx\svgwidth\undefined%
    \setlength{\unitlength}{2430.80733763bp}%
    \ifx\svgscale\undefined%
      \relax%
    \else%
      \setlength{\unitlength}{\unitlength * \real{\svgscale}}%
    \fi%
  \else%
    \setlength{\unitlength}{\svgwidth}%
  \fi%
  \global\let\svgwidth\undefined%
  \global\let\svgscale\undefined%
  \makeatother%
  \begin{picture}(1,0.4057034)%
    \lineheight{1}%
    \setlength\tabcolsep{0pt}%
    \put(0,0){\includegraphics[width=\unitlength,page=1]{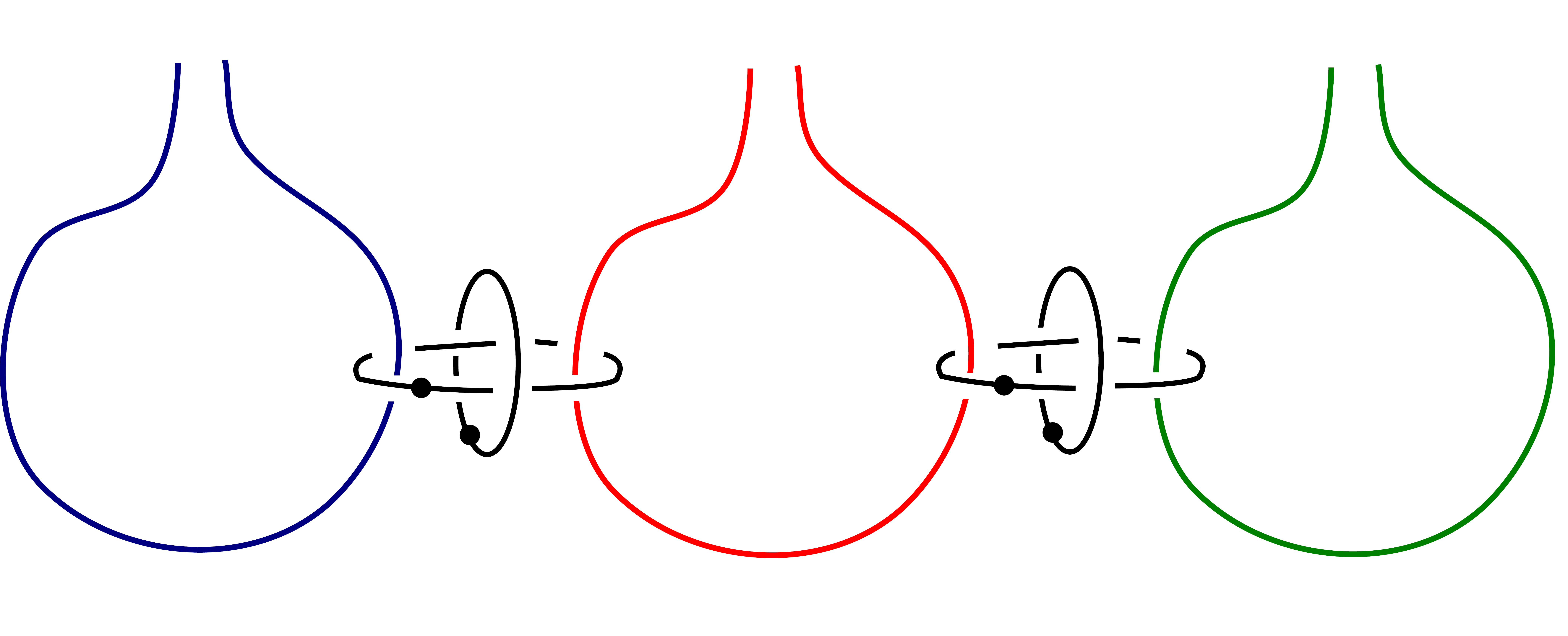}}%
    \put(0.0964788,0.01513175){\color[rgb]{0,0,0.50196078}\makebox(0,0)[lt]{\lineheight{1.25}\smash{\begin{tabular}[t]{l}$-2\lk(L)$\end{tabular}}}}%
    \put(0.53393093,0.02290282){\color[rgb]{1,0,0}\makebox(0,0)[lt]{\lineheight{1.25}\smash{\begin{tabular}[t]{l}0\end{tabular}}}}%
    \put(0.93215821,0.03017035){\color[rgb]{0,0.50196078,0}\makebox(0,0)[lt]{\lineheight{1.25}\smash{\begin{tabular}[t]{l}0\end{tabular}}}}%
    \put(0,0){\includegraphics[width=\unitlength,page=2]{Planar.pdf}}%
    \put(0.1952304,0.29195806){\color[rgb]{0,0,0}\makebox(0,0)[lt]{\lineheight{1.25}\smash{\begin{tabular}[t]{l}$L_1$\end{tabular}}}}%
    \put(0.55677603,0.29135866){\color[rgb]{0,0,0}\makebox(0,0)[lt]{\lineheight{1.25}\smash{\begin{tabular}[t]{l}$L_2$\end{tabular}}}}%
    \put(0.89863768,0.23582156){\color[rgb]{0,0,0}\makebox(0,0)[lt]{\lineheight{1.25}\smash{\begin{tabular}[t]{l}$L_3$\end{tabular}}}}%
  \end{picture}%
\endgroup%
   
 \caption{The 0-trace of the knot $K_L$, the knotification of $L$, seen on the boundary of $\natural^{\ell-1} S^1\times D^3$ (left) and an equivalent Kirby diagram of it (right).}
 \label{Planar}
\end{figure}

It is important to observe that the core of the handle (the surface $G_\ell\times\{0\}$) comes with an orientation that induces coherent orientations on each attaching sphere: such orientations have to agree with the ones of the components of $L$; this means that the manifold $X^{0,\ell}(L)$ depends on the relative orientation of $L$. 
Note that $X^{0,1}(K)$ is the 0-trace of the knot $K$.

We recall that the \emph{knotification} of an $\ell$-component link $L$ with $\ell>1$ is the knot $K_L\hookrightarrow\#^{\ell-1}S^1\times S^2$, which is shown on the left in Figure \ref{Planar}, obtained by adding $\ell-1$ oriented bands between the components of $L$, realizing $\ell-1$ merge moves; and then doing the same number of 0-surgeries along the boundaries of small disks, each with a unique ribbon intersection with one of the bands. Note that $K_L$ is null-homologous by construction.

Ozsv\'ath and Szab\'o proved in \cite{OSz} that this operation is well-defined; in other words, the diffeomorphism type of the knot $K_L$ is independent of the choice of the bands we use to perform the merge moves. For more details about knotification of links see also \cite{Kuzbary}.
\begin{prop}
 \label{prop:knotification}
 The $4$-manifold $X^{0,\ell}(L)$ for $\ell>1$ is diffeomorphic to $X(K_L)$, the trace of the $0$-framed knot $K_L\hookrightarrow\#^{\ell-1}S^1\times S^2$.
\end{prop}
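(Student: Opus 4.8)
The plan is to realize both manifolds as the same handle attachment on $D^4$, reading off the attaching knot and its framing directly from the planar surface. First I would equip the planar surface with its standard handle decomposition $G_\ell=D_0\cup b_1\cup\cdots\cup b_{\ell-1}$: one $0$-handle, the disk $D_0$, together with $\ell-1$ bands $b_j$ (surface $1$-handles) arranged so that the corresponding $\ell-1$ band moves merge the $\ell$ boundary circles of $G_\ell$ into the single circle $\partial D_0$. That the count must be $\ell-1$ is forced by $\chi(G_\ell)=2-\ell=1-(\ell-1)$ (this is where the hypothesis $\ell>1$ enters). Thickening by the normal $D^2$-factor turns $D_0\times D^2$ into a $4$-dimensional $2$-handle and each $b_j\times D^2$ into a $4$-dimensional $1$-handle, so $X^{0,\ell}(L)=D^4\cup(G_\ell\times D^2)$ acquires a handle decomposition with $\ell-1$ one-handles and a single $2$-handle, exactly the shape of $X(K_L)=(\natural^{\ell-1}S^1\times D^3)\cup(\text{one }2\text{-handle})$.

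Next I would match the attaching data. The $\ell-1$ one-handles build $\natural^{\ell-1}S^1\times D^3$, with boundary $\#^{\ell-1}S^1\times S^2$, and the remaining $2$-handle is attached along the circle $\partial D_0$. By construction $\partial D_0$ is obtained from $L$ by the $\ell-1$ band (merge) moves $b_j$, while in the dotted-circle/$0$-surgery dictionary each $4$-dimensional $1$-handle is precisely a $0$-surgery whose belt circle meets the corresponding band in a single ribbon point; this is exactly the merge-then-surger recipe defining the knotification, so $\partial D_0=K_L$ as knots in $\#^{\ell-1}S^1\times S^2$. Since $K_L$ is independent of the chosen bands by Ozsv\'ath and Szab\'o \cite{OSz}, any admissible tree of bands yields the same knot, and Figure \ref{Planar} is the model realizing one such choice.

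Finally I would verify the framing. The attaching framing of the $2$-handle equals the relative Euler number of the normal $D^2$-bundle of the core surface $G_\ell\times\{0\}$; expressed through the boundary data this is $\sum_{i=1}^{\ell}t_i+2\sum_{i<j}\lk(L_i,L_j)=\sum_i t_i+2\lk(L)$, where the $t_i$ record the self-linking along each component and each pair $L_i,L_j$ contributes $2\lk(L_i,L_j)$ through the part of $G_\ell$ joining them. The defining constraint $\sum_i t_i=-2\lk(L)$ makes this total vanish, so $K_L$ receives its $0$-framing and $X^{0,\ell}(L)\cong X(K_L)$. The step I expect to be most delicate is this last one together with the orientation bookkeeping flagged before the statement: the core surface must induce the given orientations on the components of $L$, and one has to track these through each band to be sure the merges, and hence the Euler-number contributions $2\lk(L_i,L_j)$, appear with the correct signs. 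The well-definedness of traces of framed links in $\#^{\ell-1}S^1\times S^2$ \cite{LP} then guarantees that the right-hand side is unambiguous.
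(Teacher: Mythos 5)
Your proposal is correct and takes essentially the same route as the paper's own proof: the paper likewise splits the thickened surface into the $\ell-1$ bands (whose thickenings are the $4$-dimensional $1$-handles building $\natural^{\ell-1}S^1\times D^3$) plus the complementary disk (whose thickening is the $2$-handle, attached along a knot that matches the Ozsv\'ath--Szab\'o knotification construction), and it verifies the framing by the same relative-Euler-number identity $t_1+\cdots+t_\ell+2\cdot\lk(L)=0$. The only cosmetic differences are that you phrase the splitting as a handle decomposition of $G_\ell$ and make the Euler-characteristic count and the dotted-circle/ribbon-intersection dictionary explicit, where the paper simply points to its figures.
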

\begin{proof}
 We split the handle attachment in two parts. We start by attaching the grey band in Figure \ref{Grey}, which means gluing $I\times I\times D^2$ along $\partial I\times I\times D^2$ with framings zero; this procedure needs to be repeated $\ell-1$ times. This requires the choice of $\ell-1$ pairs of oriented arcs $\alpha_i$ and $\beta_i$ inside the two components of $L$ where we are performing the $\ell-1$ merge moves; one has $\nu(\alpha_i)\cong\nu(\beta_i)\cong I\times D^2$ as attaching regions. 
 As explained before, the orientations need to agree accordingly. 
 
 The 4-manifold obtained by this bridge construction is exactly $\natural^{\ell-1}S^1\times D^3$; and in order to complete the original handle attachment we glue a standard 4-dimensional 2-handle $\overline G_\ell\times D^2$ along a framed knot $J$ in $\#^{\ell-1}S^1\times S^2=\partial(\natural^{\ell-1}S^1\times D^3)$. 
 \begin{figure}[ht]
 \centering
 \def\svgwidth{8cm}
 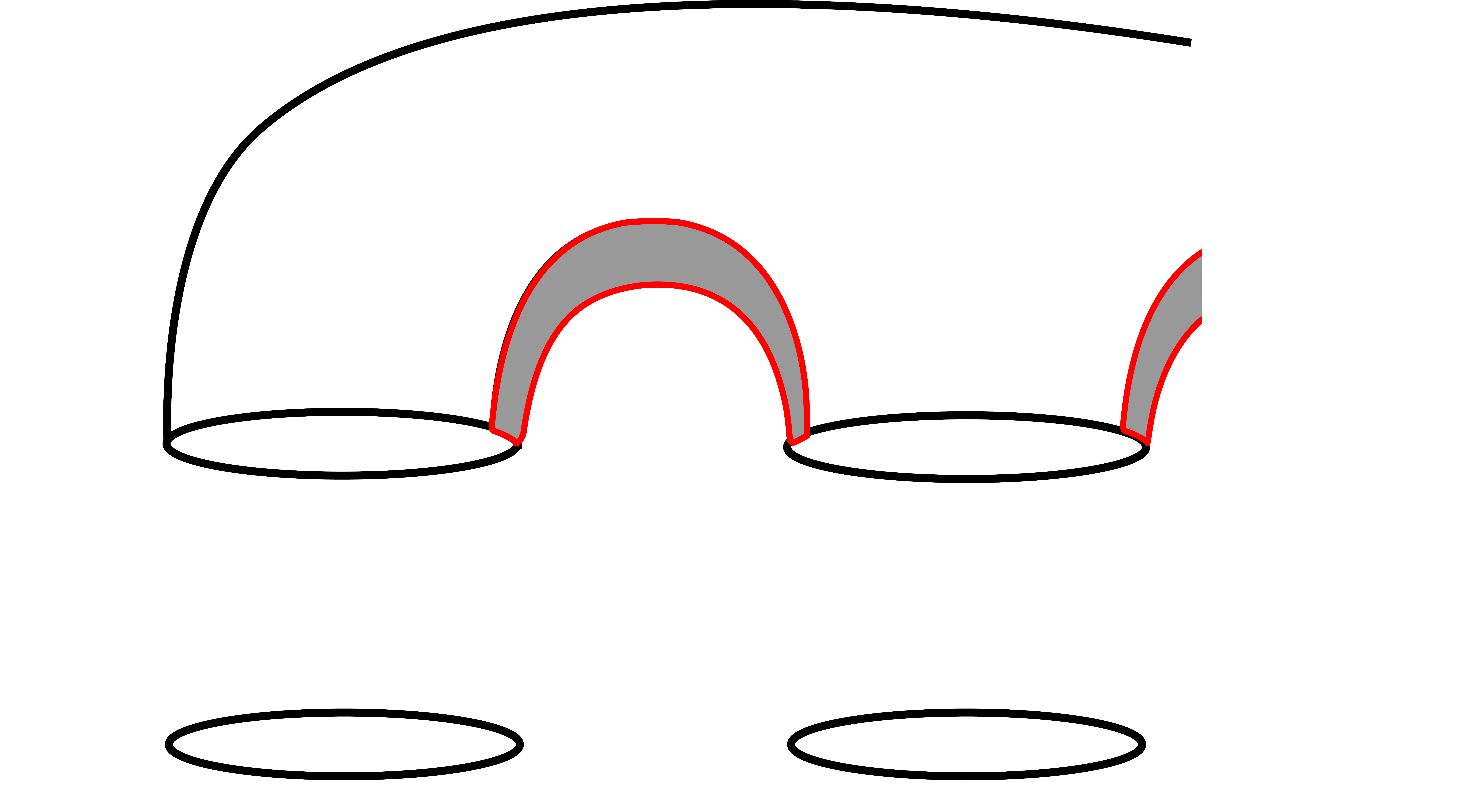
 \caption{Attaching a planar 2-handle along an $\ell$-component link $L$. The surface $\overline G_\ell$ is $G_\ell$ without the grey bands.}
 \label{Grey}
 \end{figure}
 The knot $J$ consists of the arcs
 $L\setminus(\alpha_1\cup...\cup\alpha_{\ell-1}\cup\beta_1\cup...\cup_{\ell-1})$
 joined by the $\ell-1$ pairs of arcs $a_i$ and $b_i$, corresponding
 to $I\times\partial I\times D^2$; hence, we have that $J$ is the
 knotification of $L$ since its construction matches the one of $K_L$
 in \cite{OSz}. In addition, its framing $t$ is determined as follows: \[t=\overline e_J(\overline G_\ell)=\overline e_L(G_\ell)=t_1+...+t_\ell+2\cdot\lk(L)=0\:,\] where $\overline e$ is the relative normal Euler number. 
\end{proof}
\paragraph*{High order 2-handles} Mixing the constructions explained in the previous two paragraphs gives rise to what we are going to call high order traces of a link. Let us start from a partition of an $\ell$-component link $L$ in $S^3$; we call $\mathcal P=\{P_1,...,P_k\}$ a \textbf{weighted partition} of $L$ if each sublink $P_i\in\mathcal P$, whose number of components is $\ell_i$, has an integer $g_i$ associated to it, and $g_i\geq0$ for every $i=1,...,k$.

We denote by \textbf{high order $0$-trace} of $L$ \textbf{with partition} $\mathcal P$ the 4-manifold $X^{\mathcal P}(L)$ obtained in the following way: we begin the attachment of 0-framed planar 2-handles along each sublink $P_i$, obtaining a 0-framed link with components $K_{P_1},...,K_{P_k}$ on the boundary of $\natural^{\ell-k}S^1\times D^3$; then we attach a 0-framed genus $g_i$ 2-handle along $K_{P_i}$ for $i=1,...,k$. The resulting 4-manifold is well-defined because the ordering of the gluings is unimportant; moreover, in the case when $k=1$ we just denote the trace with $X^{g,\ell}(L)$. A Kirby diagram for $X^{\mathcal P}(L)$ can be easily produced by combining the ones in Figures \ref{High} and \ref{Planar}.
\paragraph*{Trace embedding lemma} In accordance with other results in the previous section, here we prove a third version of the trace embedding lemma. We recall that if $X$ is a compact smooth 4-manifold and $\partial X=S^3$ then $\widehat X=X\cup\{4-\text{handle}\}$; in addition, when $L$ bounds a compact oriented surface $\Sigma$, such that all the connected components of $\Sigma$ have boundary in $L$, we say that $\mathcal P_\Sigma$ is the weighted partition on $L$ \emph{induced by} $\Sigma$ in the natural way.
\begin{lemma}[Trace embedding lemma for high order traces]
 A link $L$ in $S^3$ bounds a compact, oriented, smooth surface $\Sigma$, properly embedded in a possibly exotic $4$-disk $X$, if and only if $X^{\mathcal P_\Sigma}(L^*)$ is smoothly embedded in $\widehat X$. 
\end{lemma}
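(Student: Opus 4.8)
The plan is to run the same argument as in the trace embedding lemma for links (Lemma~\ref{lemma:trace}) and its framed refinement, only replacing the $0$-framed $2$-handles by the high order handles $X^{g_i,\ell_i}(P_i)$ assembled in the previous paragraphs. Throughout I would fix the handle decomposition $\widehat X=D^4\cup_{S^3}X$ and recall that this gluing is orientation-reversing, so that a link $L\subset\partial X=S^3$ is read as its mirror $L^*$ in $\partial D^4$; this is exactly where the mirror in the statement comes from.

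For the forward implication, suppose $\Sigma\subset X$ is properly embedded with $\partial\Sigma=L$, and write $\Sigma=\Sigma_1\sqcup\dots\sqcup\Sigma_k$ for its connected components, where $\Sigma_i$ has genus $g_i$ and boundary the sublink $P_i$. Since by hypothesis every component meets $\partial X$, this is precisely the data of the induced weighted partition $\mathcal P_\Sigma=\{P_1,\dots,P_k\}$. I would take disjoint tubular neighborhoods $\nu(\Sigma_i)$ and identify $D^4\cup\nu(\Sigma_1)\cup\dots\cup\nu(\Sigma_k)$ with $X^{\mathcal P_\Sigma}(L^*)$: each $\nu(\Sigma_i)$ is a copy of $F_{g_i,\ell_i}\times D^2$ glued along $P_i^*$, which is exactly the genus $g_i$, $\ell_i$-boundary handle of the construction. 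As $\nu(\Sigma_i)\subset X\subset\widehat X$ and the neighborhoods are disjoint, this exhibits $X^{\mathcal P_\Sigma}(L^*)\hookrightarrow\widehat X$. Conversely, given an embedding $X^{\mathcal P_\Sigma}(L^*)\hookrightarrow\widehat X$, I would use the decomposition $X^{\mathcal P_\Sigma}(L^*)=D^4\cup\{\text{high order handles}\}$ and read off the cores of the handles as properly embedded surfaces with boundary $L^*\subset\partial D^4=S^3$, of genus $g_i$ and boundary $P_i^*$. Because $\widehat X\setminus\mathring{D^4}\cong X$ (the uniqueness of smooth $D^4$-embeddings from Section~\ref{sec:prelims}, used as in Proposition~\ref{prop:bs} and Lemma~\ref{lemma:trace}), turning the picture upside down shows that $L=(L^*)^*$ bounds the mirror surfaces, i.e.\ a properly embedded $\Sigma$ in $X$ whose components realise $\mathcal P_\Sigma$.

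The one genuinely new point, absent in the $0$-framed case, is matching framings: I must check that the normal bundle of each $\Sigma_i$ is trivial and carries exactly the framing prescribed by $X^{g_i,\ell_i}$. Here I would argue that the relative normal Euler number of \emph{any} properly embedded surface in $X$, measured against the Seifert framing of its boundary, vanishes. Indeed, $X$ is contractible, so $H_2(X,\partial X;\Z)=0$ and hence $[\Sigma_i]=0$, forcing the relative self-intersection of $\Sigma_i$ to be $0$. This is precisely the vanishing of the total framing $t=t_1+\dots+t_\ell+2\lk=0$ computed in Proposition~\ref{prop:knotification}, so $\nu(\Sigma_i)$ is diffeomorphic to the high order handle with the correct framing and the two pictures agree.

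I expect this Euler-number bookkeeping to be the main obstacle, together with the sign conventions under mirroring (recall that the framings of $\vec L^*$ are those of $\vec L$ with reversed signs, and $\lk(L^*)=-\lk(L)$), and with the requirement that the orientation on each core surface $G_{\ell_i}\times\{0\}$ induce on its attaching spheres the orientations of $P_i$. Once these are reconciled, the rest is the same formal manipulation of handle decompositions as in Lemma~\ref{lemma:trace}, so no further new ideas should be needed.
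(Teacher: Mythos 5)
Your proposal is correct and follows essentially the same route as the paper's proof: the same decomposition $\widehat X=D^4\cup_{S^3}X$, reading handle cores as the surface components in one direction, identifying $D^4\cup\nu(\Sigma_i)$ with the high order handles in the other, and pinning down the framings via $H_2(X,\partial X;\Z)\cong\{0\}$ forcing the relative Euler number of $\nu(\Sigma_i)$ to vanish, exactly as in the paper. The only cosmetic difference is that you flag the mirror/orientation sign conventions more explicitly, which the paper leaves implicit.
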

\begin{proof}
 For the first implication we assume that $X^{\mathcal P_\Sigma}(L^*)\hookrightarrow\widehat X$. Hence, we proceed as in other versions of the lemma and we write $\widehat X=D^4\cup_{S^3}X$ with $L^*\hookrightarrow S^3=\partial D^4$ and the high order 2-handles inside $X$. On each sublink $P^*_1,...,P^*_k$ of $L^*$ in the weighted partition $\mathcal P_\Sigma$, the core of the corresponding high order 2-handle, whose attaching sphere is precisely $P_i^*$, is diffeomorophic to the surface $\Sigma_i=F_{g_i}\#G_{{\ell_i}-1}$. Since all the high order 2-handles we attached are disjoint, we have that $\Sigma=\Sigma_1\cup...\cup\Sigma_k\hookrightarrow X$ and its boundary is the link $(L^*)^*=L$. 
 
 We now prove the converse. Let us consider $\widehat X=D^4\cup_{S^3}X$ and take $L^*\hookrightarrow S^3=\partial D^4$. We have that each sublink $P_i^*$ of $L^*$ bounds a connected component $\Sigma_i$ of $\Sigma$, with genus $g_i$ and $\ell_i$ boundary components according to $\mathcal P_\Sigma$, for $i=1,...,k$. Since we know that $H_2(X,\partial X;\Z)\cong\{0\}$, we obtain that each $\Sigma_i$ bounds an embedded 3-homology class in $X$; thus showing that the relative Euler number $\overline e$ of $\nu(\Sigma_i)$ is zero.
 We can then conclude that \[t_1+...+t_{\ell_i}+2\cdot\lk(P_i)=\overline e_{P_i}(\Sigma_i)=0\] and then $\Sigma_i$ is glued with framing $-2\cdot\lk(P_i)$ to $D^4$ for any $i$.
 
 We have proved that $(D^4\cup\nu(\Sigma_i))\cong
 X^{g_i,\ell_i}(P_i^*)$ for every $i=1,...,k$. Hence, since the
 4-manifold $X^{\mathcal P_\Sigma}(L^*)$ is constructed by
 performing $k$ consecutive high order 2-handle attachments along the
 $P_i^*$'s, the latter just appears to be diffeomorphic to
 $D^4\cup\nu(\Sigma)$ which is embedded in $\widehat X$.
\end{proof}
\begin{remark}
 As for Lemma \ref{lemma:trace}, the same proof also shows that $L$ bounds a compact oriented $\Sigma$, properly locally flat embedded in $D^4$, if and only if $X^{\mathcal P_\Sigma}(L)$ is a locally flat submanifold of $S^4$.
\end{remark}

\subsection{Slice genera of links in homotopy 4-spheres}
The slice genus $g_4$ for a knot $K$ has a unique natural definition:
the minimal genus of a compact, oriented, connected surface $F$,
properly and smoothly embedded in $D^4$, such that $K=\partial F$. This is not the
case for links; there are many versions of slice genera of a link
$L$. In this paper we are going to recall the most important ones, but
focusing only on the two that are studied more often.

Consider an $\ell$-component link. We denote by $g_4(L)$ what it is
usually called the \emph{slice genus} of $L$, which has exactly the
same definition of $g_4$ for knots. Moreover, we define $g_4^X(L)$ the
same invariant, but where surfaces are taken in an exotic 4-disk $X$
instead of $D^4$. We recall that $L$ is said to bound a planar smooth surface in $X$ when $g_4^X(L)=0$.

In addition, it follows from standard results that every
null-homologous knot $J$ in $\#^nS^1\times S^2$ bounds a
null-homologous properly embedded surface in $\natural^nS^2\times
D^2$, for every smooth structure and $n\geq0$. Hence, the knot $J$ is
smoothly slice, in a possibly exotic $\natural^nS^2\times D^2$, if and
only if it is $H$-slice; and in this case the only possible value for
the framing is zero. We recall that a knot $K$ is \emph{$H$-slice}, in
a compact 4-manifold $W$ with boundary, when $K$ bounds a
null-homologous properly embedded disk in $W$.
\begin{prop}
 \label{prop:planar}
 An $\ell$-component link $L$ in $S^3$ bounds a planar smooth surface in $X$ if and only if its knotification $K_L$ is smoothly $H$-slice in $X\:\natural^\ell S^2\times D^2$.
\end{prop}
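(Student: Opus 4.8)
The plan is to prove the proposition by chaining three tools already available — the trace embedding lemma for high order traces, Proposition \ref{prop:knotification}, and the trace embedding lemma for framed links — and to insert one extra geometric step, a trivial $1$-handle attachment, which is exactly what accounts for the $\ell$-th (rather than the $(\ell-1)$-st) copy of $S^2\times D^2$.

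First I would record the diffeomorphism-theoretic identifications. That $L$ bounds a planar smooth surface in $X$ means $g_4^X(L)=0$, i.e. $L$ bounds a connected, genus-$0$, properly embedded surface $\Sigma$ with exactly $\ell$ boundary circles; its induced weighted partition is $\mathcal P_\Sigma=\{L\}$ with $g=0$, so the associated high order trace is $X^{0,\ell}(L^*)$. The trace embedding lemma for high order traces then gives
\[
L \text{ bounds a planar surface in } X \iff X^{0,\ell}(L^*)\hookrightarrow \widehat X .
\]
Applying Proposition \ref{prop:knotification} to the mirror (using that knotification commutes with mirroring, $K_{L^*}=(K_L)^*$, which is immediate from the symmetry of the band-and-surgery construction), we obtain $X^{0,\ell}(L^*)\cong X((K_L)^*)$, the $0$-trace of the knotification $(K_L)^*\hookrightarrow\#^{\ell-1}S^1\times S^2$, realized over $\natural^{\ell-1}S^1\times D^3$.

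Next I would pass to the $H$-slice side. Including $(K_L)^*\hookrightarrow\#^{\ell-1}S^1\times S^2\hookrightarrow\#^\ell S^1\times S^2=\partial(\natural^\ell S^1\times D^3)$, its $0$-trace taken over the larger handlebody is $X^{0,\ell}(L^*)\,\natural\,(S^1\times D^3)$, since the extra $S^1\times S^2$ summand contributes one further $S^1\times D^3$ summand while the $2$-handle stays in the $\natural^{\ell-1}$ part. Using $(\widehat X)_\ell\cong X\,\natural^\ell S^2\times D^2$ (from Lemma \ref{lemma:bouquet} and the remark that $M_k\cong M_0\,\natural^kS^2\times D^2$), the trace embedding lemma for framed links with $k=\ell$ yields
\[
K_L \text{ smoothly slice in } X\,\natural^\ell S^2\times D^2 \iff X^{0,\ell}(L^*)\,\natural\,(S^1\times D^3)\hookrightarrow\widehat X ,
\]
and since $K_L$ is null-homologous the discussion preceding the proposition lets me replace ``smoothly slice'' by ``$H$-slice'' (the framing being forced to $0$). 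Thus the entire statement reduces to the single equivalence
\[
X^{0,\ell}(L^*)\hookrightarrow\widehat X \iff X^{0,\ell}(L^*)\,\natural\,(S^1\times D^3)\hookrightarrow\widehat X .
\]

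The heart of the argument — and the step I expect to be the only genuine obstacle — is this last equivalence: showing that an embedding of the compact, connected $4$-manifold $A:=X^{0,\ell}(L^*)$ into the homotopy sphere $\widehat X$ always extends to an embedding of $A\,\natural\,(S^1\times D^3)$. The backward implication is trivial by restriction. For the forward implication I would attach a trivial $4$-dimensional $1$-handle to $A$ inside $\widehat X$: choosing two small $3$-balls in the connected boundary $\partial A\cong\#^\ell S^1\times S^2$ and a short arc between them pushed off $\partial A$ into the collar lying in the complement $\widehat X\setminus\mathring A$, the thickened arc $I\times D^3$ is disjoint from $\mathring A$, and attaching a $1$-handle to the connected $A$ produces exactly $A\,\natural\,(S^1\times D^3)$, still embedded. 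The only points to verify are that the complement is nonempty with connected boundary (automatic, as $\partial A$ is connected) and that the handle can be kept in the collar (automatic); I would note that this trivial extension is precisely what makes the correct stabilization count $\ell$ rather than $\ell-1$, the genuine knotification trace living over $\natural^{\ell-1}S^1\times D^3$ but embeddability being insensitive to one further $S^1\times D^3$-summand. Finally, for $\ell=1$ the knotification is $L$ itself and the whole chain degenerates to the ordinary trace embedding lemma, giving the same conclusion.
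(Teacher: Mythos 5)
Your proof is correct and follows the same skeleton as the paper's own argument: the trace embedding lemma for high order traces, then Proposition \ref{prop:knotification}, then the trace embedding lemma for framed links together with the slice-equals-$H$-slice observation for null-homologous knots. The one place you genuinely add something is instructive. The paper's two-line proof chains these results directly, which naturally lands on sliceness of $K_L$ in $M_{\ell-1}\cong X\,\natural^{\ell-1}S^2\times D^2$, the natural home of $K_L\hookrightarrow\#^{\ell-1}S^1\times S^2$; indeed the examples that follow in the paper (e.g. $K_B$ not slice in $\natural^2S^2\times D^2$ for the $3$-component Borromean link, $K_W$ not slice in $S^2\times D^2$ for the $2$-component Whitehead link) all use $\ell-1$ copies, so the $\natural^\ell$ in the printed statement is best read as an off-by-one slip. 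You noticed the resulting boundary mismatch ($\partial(X\,\natural^\ell S^2\times D^2)=\#^\ell S^1\times S^2$ while $K_L\hookrightarrow\#^{\ell-1}S^1\times S^2$), interpreted $K_L$ via the standard inclusion, and bridged the gap by showing that embeddability of the trace into $\widehat X$ is insensitive to a boundary-sum stabilization by $S^1\times D^3$: the backward direction by restriction, the forward direction by a trivial $1$-handle embedded in an exterior collar of $\partial A$. That step is sound (the bicollar exists because the embedding is smooth of codimension zero, and choosing the orientation-compatible framing of the handle yields exactly $A\,\natural\,S^1\times D^3$), and it shows the two readings of the statement are equivalent. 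You are also more careful than the printed proof about mirrors, recording $K_{L^*}=(K_L)^*$, which the paper elides. In short: same route as the paper, with one small but genuine addition that either proves the statement as literally printed or, equivalently, exposes and repairs the off-by-one in its formulation.
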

\begin{proof}
 Using the trace embedding lemma for high order traces, we have that
 $L$ bounds a planar smooth surface in $X$ if and only if
 $X^{0,\ell}(L)\hookrightarrow\widehat X$. According to Proposition
 \ref{prop:knotification}, one also has $X^{0,\ell}(L)\cong
 X(K_L)$ and, since $K_L$ is null-homologous, according to what we
 said before we just need to apply the trace embedding lemma for
 framed links.
\end{proof}
The proof of the 'only if' implication in the latter proposition already
appeared in \cite{Kuzbary}.  We now recall the definition of the
\emph{maximal $4$-dimensional Euler characteristic} of $L$ as the
maximum of $\chi(\Sigma)$, where $\Sigma$ is a compact oriented surface,
properly and smoothly embedded in $X$, such that $L=\partial\Sigma$ and every
connected component of $\Sigma$ has boundary in $L$. We denote this
invariant with $\chi_4^X(L)$; note that the difference between the
surfaces $F$ and $\Sigma$ is that the second one is not necessarily
connected. $\chi_4^X(L)$ takes integer values which can be at most
$\ell$, with equality if and only if $L$ is smoothly slice in $X$.
\begin{remark}
 Another version of the slice genus which sometimes appears in the
 literature is $g_4^*$, which is defined as the minimal genus of
 $\Sigma$ as before, but together with the condition that each
 connected component of $\Sigma$ has exactly one boundary
 component. The invariant $g_4^*$ is less used because it can only be
 defined for links with zero linking matrix, nonetheless Theorem
 \ref{thm:slice} holds for it too.
 
 Furthermore, we also mention the fact that some authors prefer to renormalize $\chi_4(L)$ as follows: \[2G_4(L)-\ell=-\chi_4(L)\:,\] because in this way one has $G_4(L)\geq0$ as for $g_4(L)$. With the latter convention, we have $G_4(L)=0$ if and only if $L$ is smoothly slice.
\end{remark}
In Proposition \ref{prop:smallexotic} we showed that if $X$ is a small exotic $4$-disk then $\mathcal S_X=\mathcal S_{D^4}$, in other words the sets of slice links in $X$ and $D^4$ coincide. Now we prove that the same is true for the slice genus and the maximal 4-dimensional Euler characteristic.
\begin{proof}[Proof of Theorem \ref{thm:slice}]
 The strategy of the proof is to argue that if $X_1$ and $X_2$ are two possibly exotic 4-disks such that $X_1\hookrightarrow X_2$ and $X^{\mathcal P_{\Sigma}}(L)\hookrightarrow X_1$ then obviously one has $X^{\mathcal P_{\Sigma}}(L)\hookrightarrow X_2$ for every weighted partition of $L$. Since by assumption we have $D^4\hookrightarrow X\hookrightarrow D^4$, it follows that $X^{\mathcal P_{\Sigma}}(L)\hookrightarrow X$ if and only if $X^{\mathcal P_{\Sigma}}(L)\hookrightarrow D^4$; hence, by applying the trace embedding lemma for high order traces we obtain that $L$ bounds a surface $\Sigma$ in $X$ if and only if the same happens in $D^4$.
\end{proof}

\subsection{Examples of non-slice null-homologous knots in \texorpdfstring{$S^1\times S^2$}{S1xS2}}
In the last subsection of the paper we want to apply Proposition \ref{prop:planar} to give an example of an infinite family of null-homologous knots in $S^1\times S^2$ which are not smoothly slice in $S^2\times D^2$. 
\begin{figure}[ht]
 \centering
 \includegraphics[width=6cm]{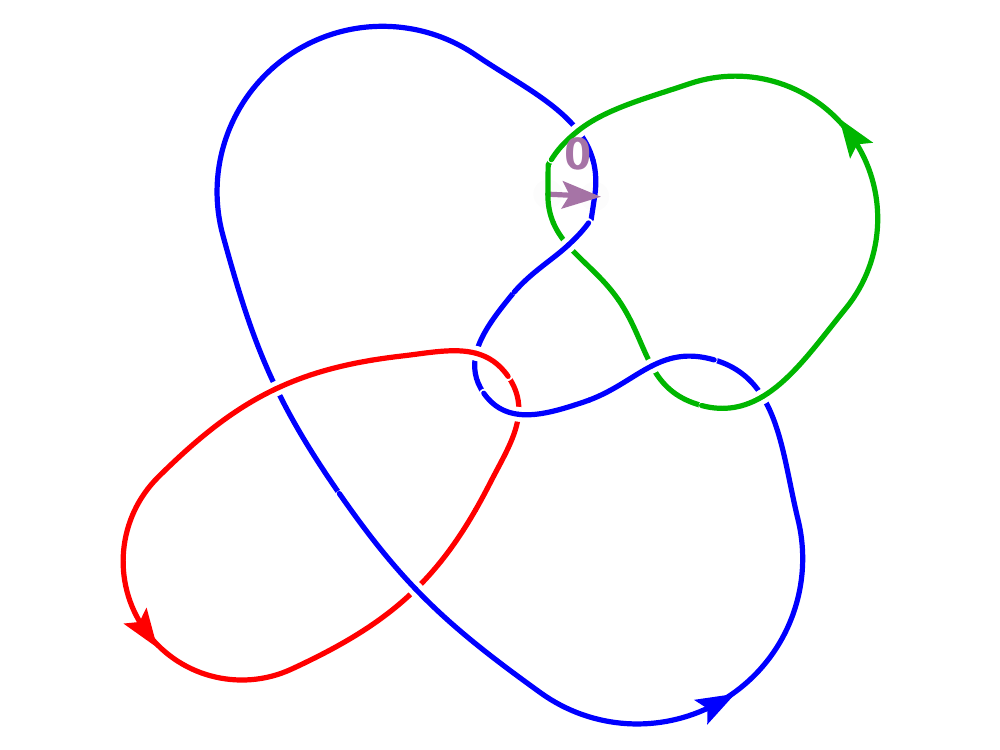}
 \hspace{1cm}
 \includegraphics[width=6cm]{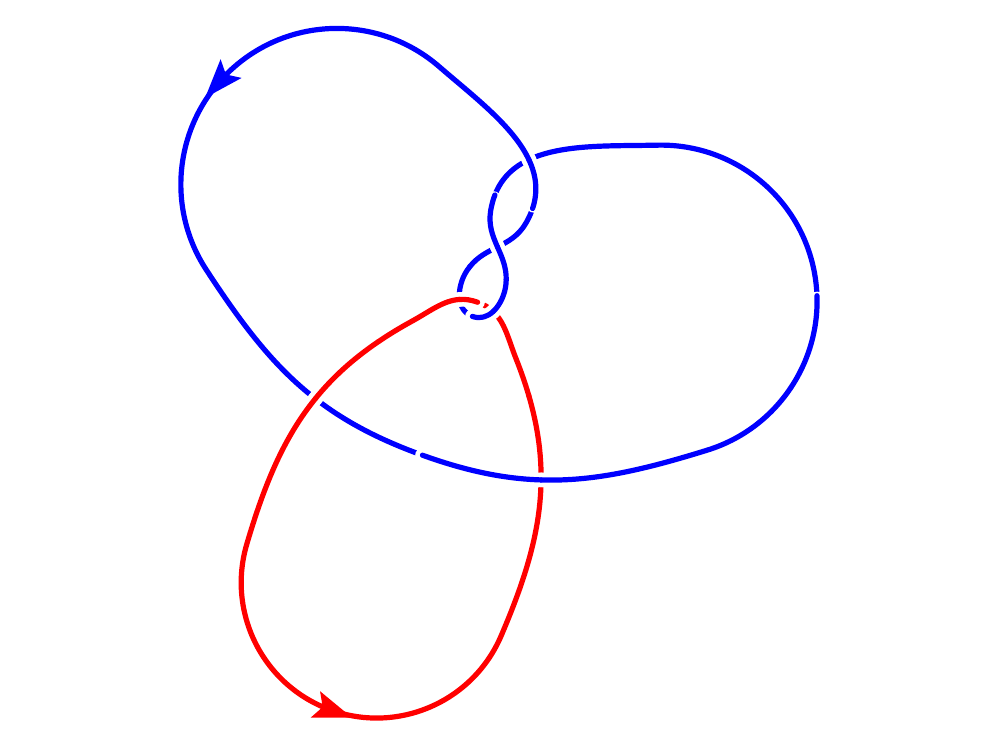}
 \caption{A diagram for the Borromean link $B$ (left) and the Whitehead link $W$ (right). An untwisted band is highlighted: this represents a merge move connecting $B$ to $W$.}
 \label{B-W}
\end{figure}
As already observed by Kuzbary in \cite{Kuzbary}, to obstruct that a knot $K$ in $S^1\times S^2$ is not smoothly slice in $S^2\times D^2$ is not enough to show that $K$ is not concordant to the unknot; in fact, the knot $K_{L_1}$ in Figure \ref{L_n} is $H$-slice in $S^2\times D^2$, because it is the knotification of the Hopf link which bounds a smooth annulus in $S^3$, but in \cite{Kuzbary} it is proved to not be concordant to the unknot.

We start by a special case which involves two of the most studied links in $S^3$: the Borromean link $B$ and the Whitehead link $W$; a diagram for these links appears in Figure \ref{B-W}. Conway and Orson proved in \cite{CO} that $g_4(B)=1$ with every relative orientation; this also shows that $g_4(W)=1$: in fact, in the case $W$ was the boundary of a smooth annulus in $D^4$, we could build a planar smooth surface bounded by $B$ using the merge move pictured in Figure \ref{B-W}.
\begin{figure}[ht]
 \centering
 \includegraphics[width=7cm]{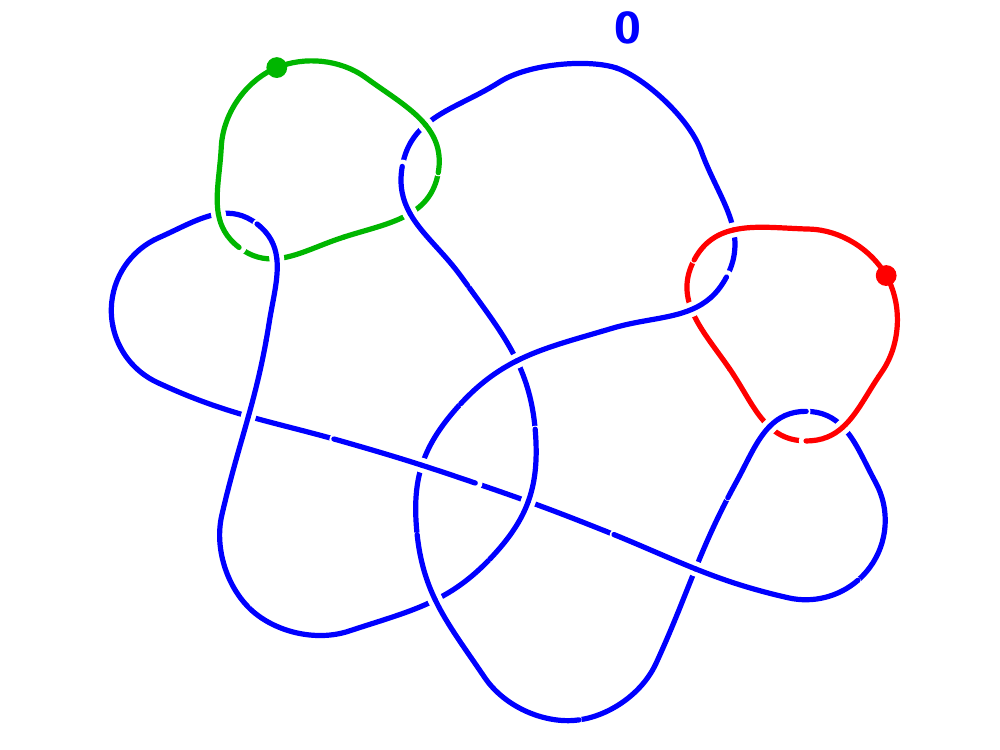}
 \caption{The $0$-trace of $K_B\hookrightarrow\#^2S^2\times D^2$, the knotification of the Borromean link, with one of the possible relative orienations.}
 \label{K_B}
\end{figure}
Proposition \ref{prop:knotification} gives that the corresponding knotifications $K_B$ and $K_W=K_{L_2}$ are not smoothly slice in $\#^2S^2\times D^2$ and $S^2\times D^2$ respectively; such knots appear in Figures \ref{K_B} and \ref{L_n}.
\begin{prop}
 The knots $K_{L_n}$ in \emph{Figure \ref{L_n}} for $n\leq0$ form an infinite family of null-homologous knots in $S^1\times S^2$ which are not smoothly slice in $S^2\times D^2$. 
\end{prop}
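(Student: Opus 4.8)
The plan is to convert the statement, via the knotification machinery of the previous subsection, into a purely three-dimensional assertion about the slice genus in $S^3$, and then to obstruct sliceness across the whole family by comparison with the Borromean link. First I would dispose of the structural claims, which are essentially definitional: each $K_{L_n}$ is by construction the knotification of a $2$-component link $L_n\hookrightarrow S^3$, so it is a null-homologous knot lying in $\#^{2-1}S^1\times S^2=S^1\times S^2$, exactly as asserted. Moreover, since $K_{L_n}$ is null-homologous, smooth sliceness and $H$-sliceness in a (possibly exotic) $S^2\times D^2$ coincide, as recalled in the text, so it suffices to decide smooth sliceness.

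Next I would apply Proposition~\ref{prop:planar} to the $2$-component link $L_n$: the knot $K_{L_n}$ is smoothly slice in $S^2\times D^2$ if and only if $L_n$ bounds a connected planar smooth surface in $D^4$, that is, if and only if $g_4(L_n)=0$. (For $n$ corresponding to the Hopf link this recovers the $H$-slice example $K_{L_1}$, while for the Whitehead case it recovers $g_4(W)=1$.) Thus the entire proposition is equivalent to the three-dimensional statement that $g_4(L_n)\geq 1$ for every $n\leq 0$. This reduction also pinpoints the main obstacle: the links in this family have vanishing classical (Tristram--Levine) signatures, so those invariants cannot detect the slice genus, and one is forced to import the deep input $g_4(B)=1$ of Conway and Orson~\cite{CO}.

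To establish the lower bound I would, for each $n\leq 0$, imitate the merge-move argument already carried out for the Whitehead link. Concretely, I would exhibit a single band realizing a merge move connecting the Borromean link $B$ to $L_n$, exactly as in Figure~\ref{B-W}, and argue by contradiction. If $g_4(L_n)=0$, then $L_n$ bounds a connected genus-zero surface in $D^4$, necessarily an annulus with $\chi=0$; attaching the band that realizes the merge introduces a single saddle and yields a connected surface bounded by $B$ with three boundary circles and $\chi=-1$, namely a planar pair of pants ($\chi=2-0-3=-1$). This would force $g_4(B)=0$, contradicting $g_4(B)=1$. The delicate point, and where the real work lies, is to choose the band coherently for the entire tail $n\leq 0$ at once: one must check on the diagrams of Figure~\ref{L_n} that the crossings distinguishing $L_n$ from the Whitehead pattern are localized away from the band region, so that the same merge (landing on $B$, or at worst on a twisted Borromean variant whose four-genus is still $1$ by~\cite{CO}) applies uniformly.

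Finally, to see that this is genuinely an \emph{infinite} family rather than finitely many knots repeated, I would separate the $K_{L_n}$ by a classical invariant that varies with $n$. The cleanest choice is the Alexander polynomial: the multivariable Alexander polynomial of $L_n$ depends on the twisting parameter, and this dependence is reflected in the Alexander invariant of the knotification $K_{L_n}\hookrightarrow S^1\times S^2$, so infinitely many of the $K_{L_n}$ are pairwise non-isotopic. Combining the null-homology observation, the reduction through Proposition~\ref{prop:planar}, the uniform genus bound $g_4(L_n)\geq 1$, and this distinctness argument then yields the claim.
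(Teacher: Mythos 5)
Your first half coincides with the paper's own reduction: $K_{L_n}$ is by definition the knotification of the $2$-component link $L_n$, hence a null-homologous knot in $\#^{2-1}S^1\times S^2=S^1\times S^2$, and by Proposition~\ref{prop:planar} (together with the equivalence of sliceness and $H$-sliceness for null-homologous knots) its smooth sliceness in $S^2\times D^2$ is equivalent to $g_4(L_n)=0$. The gap lies in the step that carries all the content, namely the uniform bound $g_4(L_n)\geq1$ for all $n\leq0$. Your plan is to exhibit, for every $n\leq0$, a merge band so that a genus-zero surface for $L_n$ would yield a planar surface for the Borromean link $B$, ``or at worst on a twisted Borromean variant whose four-genus is still $1$ by~\cite{CO}.'' That last clause is not something \cite{CO} provides: Conway and Orson's theorem concerns the Borromean link itself (with its various relative orientations), not an infinite family of twisted variants, and proving $g_4\geq1$ for such variants would be a new statement essentially as hard as the one you are trying to establish. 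You explicitly defer the choice of band (``where the real work lies''), so the crucial step is asserted rather than proved; and since the twist region of $L_n$ is a tangle through which both components pass, it is not at all clear that splitting $L_n$ along a single fixed band returns $B$, or any one fixed link, uniformly in $n$.

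Moreover, the premise you use to justify this detour --- that the $L_n$ ``have vanishing classical (Tristram--Levine) signatures, so those invariants cannot detect the slice genus'' --- is false, and the paper's proof is built precisely on its negation. For $n\leq0$ the link $L_n$ is non-split and alternating, so by \cite{Cavallo} the invariant $\tau(L_n)$ is computable from the signature (which in particular is nonzero here); the computation gives $\tau(L_n)=2$, and the slice genus bound $\tau(L_n)\leq g_4(L_n)+\ell-1=g_4(L_n)+1$ immediately yields $g_4(L_n)\geq1$, with no reference to $B$ or to \cite{CO}. (The Borromean--Whitehead merge argument appears in the paper only for the single case $W=L_2$, which lies outside the range $n\leq0$ and where the $\tau$ bound is too weak.) One genuine point in your favor: you notice that calling this an \emph{infinite} family requires the knots $K_{L_n}$ to be pairwise distinct, an issue the paper passes over in silence; your suggestion to separate them by Alexander-type invariants is reasonable, though you leave it, too, as a sketch.
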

\begin{figure}[ht]
 \centering
 \def\svgwidth{6cm}
\begingroup%
  \makeatletter%
  \providecommand\color[2][]{%
    \errmessage{(Inkscape) Color is used for the text in Inkscape, but the package 'color.sty' is not loaded}%
    \renewcommand\color[2][]{}%
  }%
  \providecommand\transparent[1]{%
    \errmessage{(Inkscape) Transparency is used (non-zero) for the text in Inkscape, but the package 'transparent.sty' is not loaded}%
    \renewcommand\transparent[1]{}%
  }%
  \providecommand\rotatebox[2]{#2}%
  \newcommand*\fsize{\dimexpr\f@size pt\relax}%
  \newcommand*\lineheight[1]{\fontsize{\fsize}{#1\fsize}\selectfont}%
  \ifx\svgwidth\undefined%
    \setlength{\unitlength}{240.79947699bp}%
    \ifx\svgscale\undefined%
      \relax%
    \else%
      \setlength{\unitlength}{\unitlength * \real{\svgscale}}%
    \fi%
  \else%
    \setlength{\unitlength}{\svgwidth}%
  \fi%
  \global\let\svgwidth\undefined%
  \global\let\svgscale\undefined%
  \makeatother%
  \begin{picture}(1,0.89701189)%
    \lineheight{1}%
    \setlength\tabcolsep{0pt}%
    \put(0,0){\includegraphics[width=\unitlength,page=1]{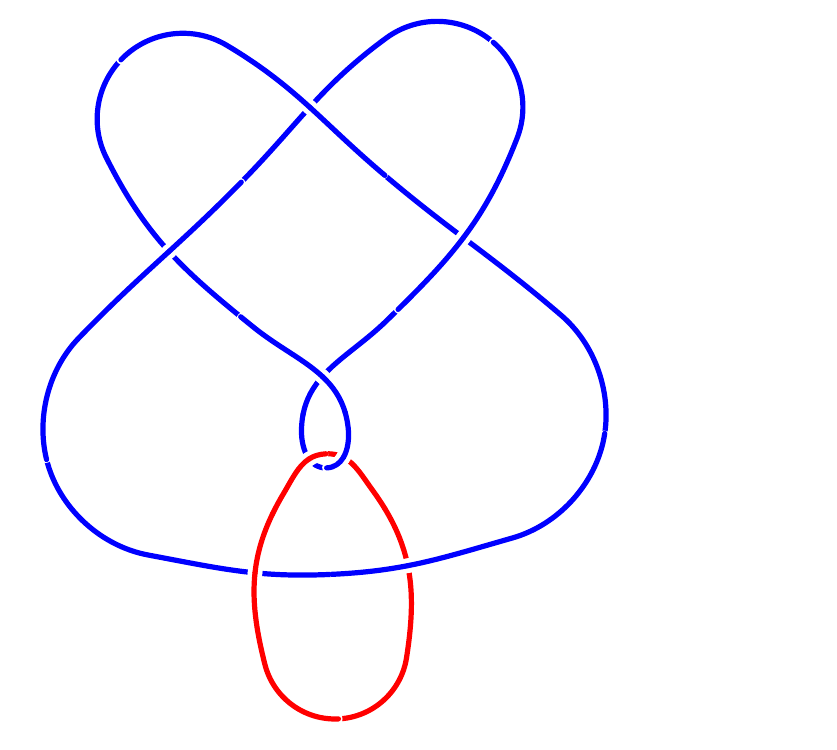}}%
    \put(0.00833974,0.32351863){\color[rgb]{0,0,1}\makebox(0,0)[lt]{\lineheight{1.25}\smash{\begin{tabular}[t]{l}0\end{tabular}}}}%
    \put(0.00584804,0.32351863){\color[rgb]{0,0,1}\makebox(0,0)[lt]{\lineheight{1.25}\smash{\begin{tabular}[t]{l}0\end{tabular}}}}%
    \put(0,0){\includegraphics[width=\unitlength,page=2]{K_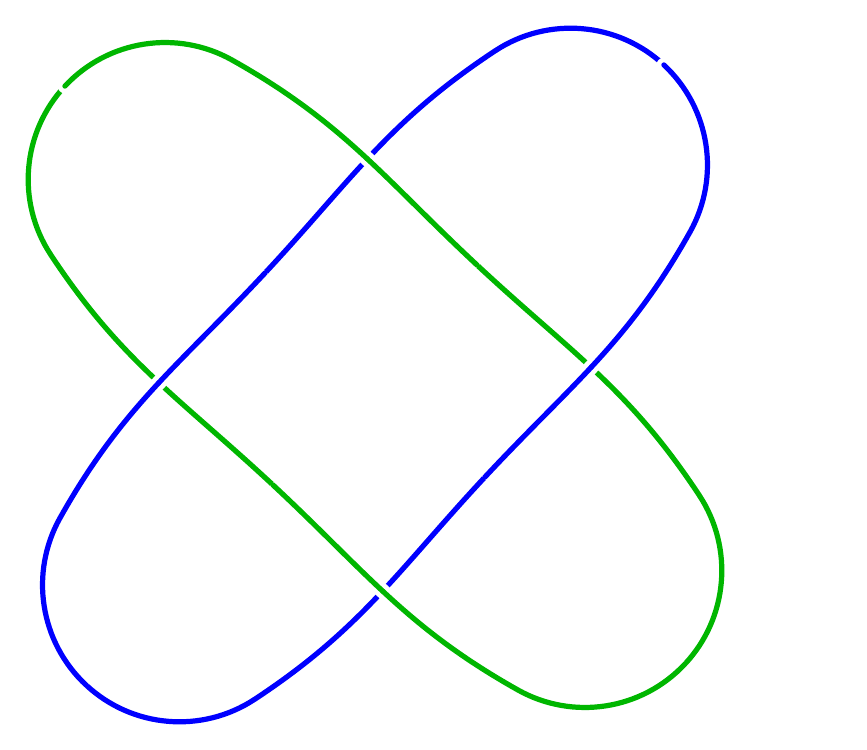}}%
    \put(0.66916238,0.56844501){\color[rgb]{0,0,0}\makebox(0,0)[lt]{\lineheight{1.25}\smash{\begin{tabular}[t]{l}$K_{L_n}$\end{tabular}}}}%
    \put(0.36700818,0.37976993){\color[rgb]{0,0,0}\makebox(0,0)[lt]{\lineheight{1.25}\smash{\begin{tabular}[t]{l}$n$\end{tabular}}}}%
  \end{picture}%
\endgroup%
   
 \hspace{1cm}
 \def\svgwidth{5.7cm}
\begingroup%
  \makeatletter%
  \providecommand\color[2][]{%
    \errmessage{(Inkscape) Color is used for the text in Inkscape, but the package 'color.sty' is not loaded}%
    \renewcommand\color[2][]{}%
  }%
  \providecommand\transparent[1]{%
    \errmessage{(Inkscape) Transparency is used (non-zero) for the text in Inkscape, but the package 'transparent.sty' is not loaded}%
    \renewcommand\transparent[1]{}%
  }%
  \providecommand\rotatebox[2]{#2}%
  \newcommand*\fsize{\dimexpr\f@size pt\relax}%
  \newcommand*\lineheight[1]{\fontsize{\fsize}{#1\fsize}\selectfont}%
  \ifx\svgwidth\undefined%
    \setlength{\unitlength}{249.6338489bp}%
    \ifx\svgscale\undefined%
      \relax%
    \else%
      \setlength{\unitlength}{\unitlength * \real{\svgscale}}%
    \fi%
  \else%
    \setlength{\unitlength}{\svgwidth}%
  \fi%
  \global\let\svgwidth\undefined%
  \global\let\svgscale\undefined%
  \makeatother%
  \begin{picture}(1,0.86526723)%
    \lineheight{1}%
    \setlength\tabcolsep{0pt}%
    \put(0,0){\includegraphics[width=\unitlength,page=1]{L_n.pdf}}%
    \put(0.05558221,0.02909353){\color[rgb]{0,0,1}\makebox(0,0)[lt]{\lineheight{1.25}\smash{\begin{tabular}[t]{l}0\end{tabular}}}}%
    \put(0.05317869,0.02909353){\color[rgb]{0,0,1}\makebox(0,0)[lt]{\lineheight{1.25}\smash{\begin{tabular}[t]{l}0\end{tabular}}}}%
    \put(0.03094256,0.77671573){\color[rgb]{0,0.71372549,0}\makebox(0,0)[lt]{\lineheight{1.25}\smash{\begin{tabular}[t]{l}0\end{tabular}}}}%
    \put(0.02853904,0.77671573){\color[rgb]{0,0.71372549,0}\makebox(0,0)[lt]{\lineheight{1.25}\smash{\begin{tabular}[t]{l}0\end{tabular}}}}%
    \put(0,0){\includegraphics[width=\unitlength,page=2]{L_n.pdf}}%
    \put(0.82686557,0.31347395){\color[rgb]{0,0,0}\makebox(0,0)[lt]{\lineheight{1.25}\smash{\begin{tabular}[t]{l}$L_n$\end{tabular}}}}%
    \put(0.42433047,0.08908011){\color[rgb]{0,0,0}\makebox(0,0)[lt]{\lineheight{1.25}\smash{\begin{tabular}[t]{l}$n$\end{tabular}}}}%
  \end{picture}%
\endgroup%
 
 \caption{The link $L_n$ (right) and the 0-trace of its knotification $K_{L_n}$ (left). Here $n$ is the number of full twists in the corresponding tangle.}
 \label{L_n}
\end{figure}
\begin{proof}
 By construction the knot $K_{L_n}$ is the knotification of the link $L_n$ which is shown on the right of Figure \ref{L_n}. When $n\leq0$ we see that $L_n$ is a non-split, alternating, 2-component link: we can then easily compute the $\tau$-invariant, as in \cite{Cavallo}, from the signature; and we get $\tau(L_n)=2$ for every $n\leq0$. Therefore, the slice genus bound from \cite{Cavallo} yields \[2=\tau(L_n)\leq g_4(L_n)+\ell-1=g_4(L_n)+1\] which means $g_4(L_n)\geq1$. Since $L_n$ never bounds a planar smooth surface in $D^4$ for $n\leq0$, we can apply Proposition \ref{prop:knotification} to argue that $K_{L_n}$ is not smoothly slice in $S^2\times D^2$.
\end{proof} 
It is actually possible to prove that $g_4(L_n)=1$ for each $n\leq0$, because $L_n$ can be turned into a positive trefoil after a merge move, and this, in light of the trace embedding lemma for high order traces, tells us that $K_{L_n}$ bounds a null-homologous smooth torus in $S^2\times D^2$.

\end{document}